\documentclass[11pt,leqno]{amsart}
\usepackage{amsmath,amsfonts,amsthm,amscd,amssymb,graphicx}
\usepackage{epstopdf}
\numberwithin{equation}{section}
\usepackage{fullpage}





\def\eps{\varepsilon }



\newcommand\R{\mathbb R}
\newcommand\C{\mathbb C}

\def\eps{\varepsilon}


\newcommand\br{\begin{remark}}
\newcommand\er{\end{remark}}
\newcommand\brs{\begin{remarks}}
\newcommand\ers{\end{remarks}}
\newcommand\bp{\begin{pmatrix}}
\newcommand\ep{\end{pmatrix}}
\newcommand{\be}{\begin{equation}}
\newcommand{\ee}{\end{equation}}
\newcommand\ba{\begin{equation}\begin{aligned}}
\newcommand\ea{\end{aligned}\end{equation}}


\newcommand{\bap}{\begin{app}}
\newcommand{\eap}{\end{app}}
\newcommand{\begs}{\begin{exams}}
\newcommand{\eegs}{\end{exams}}
\newcommand{\beg}{\begin{example}}
\newcommand{\eeg}{\end{exaplem}}
\newcommand{\bpr}{\begin{proposition}}
\newcommand{\epr}{\end{proposition}}
\newcommand{\bt}{\begin{theorem}}
\newcommand{\et}{\end{theorem}}
\newcommand{\bc}{\begin{corollary}}
\newcommand{\ec}{\end{corollary}}
\newcommand{\bl}{\begin{lemma}}
\newcommand{\el}{\end{lemma}}
\newcommand{\bd}{\begin{definition}}
\newcommand{\ed}{\end{definition}}



\newcommand{\CalT}{\mathcal{T}}

\newcommand{\RR}{{\mathbb R}}

\newcommand{\CC}{{\mathbb C}}

\newcommand{\Id}{{\rm Id }}

\newcommand{\diag}{{\rm diag }}
\newcommand{\blockdiag}{{\rm blockdiag }}

\newtheorem{theorem}{Theorem}[section]
\newtheorem{proposition}[theorem]{Proposition}
\newtheorem{corollary}[theorem]{Corollary}
\newtheorem{lemma}[theorem]{Lemma}

\theoremstyle{remark}
\newtheorem{remark}[theorem]{Remark}
\newtheorem{remarks}[theorem]{Remarks}
\theoremstyle{definition}
\newtheorem{definition}[theorem]{Definition}

\newtheorem{example}[theorem]{Example}


\newcommand\cA{{\mathcal { A}}}
\newcommand\cB{{\mathcal  B}}

\newcommand\cM{{\mathcal M}}






\newcommand{\bbR}{{\mathbb{R}}}






\newcommand{\beq}{\begin{equation}}
\newcommand{\eeq}{\end{equation}}



\title{
%
Block-diagonalization of ODEs in the semiclassical limit and $C^\omega$ vs. $C^\infty$ stationary phase
}

\author{Olivier Lafitte}
\address{ Universit\'e de Paris 13, LAGA and CEA Saclay, DM2S}
\email{lafitte@math.univ-paris13.fr}
\author{Mark Williams}
\address{University of North Carolina, Chapel Hill}
\email{ williams@email.unc.edu}
\thanks{Research of M.W. was partially supported by
NSF grants number DMS-0701201 and DMS-1001616}
\author{Kevin Zumbrun}
\address{Indiana University, Bloomington, IN 47405}
\email{kzumbrun@indiana.edu} 
\thanks{Research of K.Z. was partially supported
under NSF grants no. DMS-0300487 and DMS-0801745.}
\begin{document}

\begin{abstract}
Motivated by issues in detonation stability, we study existence of block-diagonalizing transformations 
for ordinary differential semiclassical limit problems arising in the study of 
high-frequency eigenvalue problems.  Our main results
are to (i) establish existence of block-diagonalizing transformations in
a neighborhood of infinity for analytic-coefficient ODE, and
(ii) establish by a series of counterexample sharpness of hypotheses
and conclusions on existence of block-diagonalizing transformations near a finite point.
In particular, we show that, in general, bounded transformations exist only
locally, answering a question posed by Wasow in the 1980's,
and, under the minimal condition of spectral separation, 
for ODE with analytic rather than $C^\infty$ coefficients.
The latter issue is connected with 
quantitative comparisons of $C^\omega$ vs. $C^\infty$ stationary phase estimates.
\end{abstract}
\date{\today}
\maketitle


\section{Introduction}
Motivated by problems in detonation 
and related hydrodynamical and continuum-mechanical stability, 
we consider the general {\it semi-classical limit} problem
\be\label{semi}
h (d/dx) Z=(A(x,h;q)+h B(x,h;q) ) Z,
\quad Z\in \CC^N,
\quad h\to 0^+, 
\ee
on a possibly unbounded domain $x\in [a,b]\subset \R$,
representing a generalized spectral problem with
wavelength $h\in \R^+$ and frequency $k=1/h$.
Here, $q \in \R^s$, bounded, records any additional parameters associated with the problem:
typically, spectral angle and or bifurcation parameters.

Such systems arise for example as generalized eigenvalue problems
for the linearized equations 
\be\label{e:lin}
m_t=Lv:=\sum_\alpha a_\alpha(x_1) \partial^\alpha_x v,
\qquad 
\partial^\alpha_x:=\partial_{x_1}^{\alpha_1}\cdots \partial_{x_d}^{\alpha_d},
\ee
about a steady planar solution
\be\label{planar}
w(x_1, \dots, x_d,t)=\bar w(x_1)
\ee
of a PDE 
\be\label{pde}
w_t = \mathcal{F}(w):= \sum_{\alpha}f_\alpha(w) \partial^\alpha_x w, 
\ee
in the high-frequency limit.
Specifically, taking the Fourier transform in $(x_2,\dots, x_d)$, reduces the eigenvalue problem
$\lambda v =Lv$ to an ODE 
\be\label{fulle}
\lambda \hat v=L_\xi \hat v:= \sum_\alpha a_\alpha(x_1) \partial_{x_1}^{\alpha_1}
(i\xi_2)^{\alpha_2}\cdots (i\xi_d)^{\alpha_d}.
\ee

Writing \eqref{fulle} as a first-order system in a suitable phase variable $Z$ including $v$ and appropriate 
additional $x_1$-derivatives,
and defining $h^{-1}:=|\xi, \lambda|$ as spectral frequency and 
$$
q=(\hat \lambda, \hat \xi):=(\xi/|\lambda,\xi|,\xi/|\lambda,\xi|)
$$
as spectral angle,
we arrive in the high-frequency limit $|\xi,\lambda|\to \infty$ at a problem of form \eqref{semi} in the variable $x=x_1$.
In this context, values $(q, h)$ for which there
exist solutions $Z$ of \eqref{semi} satisfying appropriate boundary
conditions at endpoints $x=a,b$ correspond to spectra
$\lambda= \hat \lambda/h$ of the Fourier transform $L_\xi$ of the linearized operator about the wave,
hence an understanding of small-$h$ behavior of \eqref{semi}
corresponds to an understanding of high-frequency spectral stability.
See \cite{Er,Z1,LWZ1,LWZ2} for specific examples pertaining to stability of detonation waves.

Our goal in this paper is a systematic treatment of {\it local block reduction} of \eqref{semi}, 
or decomposition of the equations into spectrally separated blocks possessing
nontrivial turning points, in particular in the important case, not previously treated to our knowledge,
of a neighborhood of plus or minus infinity.
At finite points, for which existence of locally diagonalizing transformations has been exhaustively studied in \cite{W,O}, 
our goal is to determine sharpness of hypotheses and conclusions, and in particular compare results obtainable by complex-analytic methods to those obtained by $C^r$ methods in, e.g., \cite{Z1,LWZ1}.
The treatment of the resulting smaller blocks after this decomposition, and the 
global implications for stability
are studied, for example, in \cite{W,O,LWZ2,LWZ3}.

For simplicity of exposition, we will suppress in what follows dependence on 
the parameter $q$ (corresponding in example \eqref{fulle}
to restriction to the 1D case $d=1$), leaving only dependence of coefficients on the parameter $h$.
However, it is an important point that all estimates of the paper carry over to the general case, {\it uniformly in the 
parameters $q$ and $h$,}
the treatment of $q$-dependence being no different than the
treatment of dependence on $h$.
As noted in \cite{LWZ1}, such uniform estimates are important in the verification
of high-frequency {\it stability}, or nonexistence of unstable spectra for
$h$ sufficiently small, a property involving all parameter values, 
as opposed to {\it instability}, a property that need be checked only at isolated strategically chosen parameter values.

This, and the treatment for unbounded domains
of block-diagonalization at infinity, 
are two of the main goals of the present analysis.
In the companion paper \cite{LWZ2}, 
we have already made good use of these ideas, applying and further extending them\footnote{
Among other things, analyzing 
nontrivial turning points, finite and infinite,
the case to which we here reduce.}
to obtain the result of high-frequency stability of 
detonation waves in certain media, 
making rigorous the important observations of Erpenbeck \cite{Er} 
made by a combination of formal and rigorous analysis
in the 1960's, but up to now not rigorously verified.
At the same time, 
given the sometimes bewildering array of different techniques that
have been developed for the analysis of this problem, including asympotic ODE and microlocal analysis/WKB expansion, 
and $C^r$ vs. analytic stationary phase, 
we seek
to make clear what can and cannot be accomplished
under various assumptions on \eqref{semi};
that is, to remove the uncertainty whether a stronger result could perhaps be obtained by a different technique.

Here, our main result is to make an explicit connection between existence of block-diagonalizing transformations 
and decay rates for certain oscillatory integrals
\be\label{osc}
\lim_{h\to 0^+} \int_a^b e^{\frac {\phi(y,h)}{h}} a(y,h)dy,
\ee
whereby we are able to resolve a number of such questions by stationary phase computations under appropriate conditions on 
the symbol $a$.
In particular, we show that: (i) block-diagonalization can in general be done only {\it locally},
answering a longstanding question posed by Wasow in his 1985 text \cite{W},
and (ii) in general requires {\it analyticity} and not just $C^r$ or $C^\infty$ of the coefficients of \eqref{semi}.

The former is discussed in the Remark, p. 89 of \cite{W}, comparing analogs of our Theorem \ref{wasow1} to an analog
of our Theorem \ref{t:approx}:
{\it 
``Theorems 6.1-1 and 6.1-2 are strictly local, although the decomposition described in Theorem 12.3-1 of the Appendix is globally valid in all of D.  A global uncoupling of the given differential equation by one and the same transformation with an asymptotic series in powers of epsilon, valid in large regions would be a boon to the theory.  On the other hand it is quite possible that such a theorem does not exist, and then one would like to see counterexamples." 
}

The latter appears to be linked to interesting recently observed phenomena in spectral theory \cite{HS} (almost-sure 
diffusion of spectra under random $C^\infty$ perturbation of an analytic-coefficient operator) and propagation of
singularities \cite{Leb} (diffraction by $C^\infty$ vs. analytic boundary in $\R^3$).
It is obtained via sharp stationary phase estimates \eqref{element}--\eqref{comp} for {\it Gevrey class} symbols $a$,
interpolating between the algebraic van der Korput bounds for $C^r$ symbols and the exponential bounds for analytic $a$;
itself of independent interest, {this estimate too so far as we know is new}.

\medskip

{\it Notation.} 
Symbols $\sim$, $\lesssim$, $\gtrsim$ indicate equality/inequality up to a 
constant factor bounded uniformly with respect to parameters.
$\sigma(M)$ indicates spectrum of a matrix or linear operator $M$.

\subsection{Background/previous results}\label{s:back}
Before stating our main results, we set the stage with a brief further discussion
of some background and motivation for the analysis.

\subsubsection{Block diagonalization and WKB expansion}\label{s:turn}
The classical WKB approach to approximating solutions of 
a system 
\be\label{simple}
hZ'=A(x)Z + hB(x)Z
\ee
falling under the general form \eqref{semi}
is to seek a basis of approximate solutions of form
\be\label{seriesexp}
Z_j(x)= e^{h^{-1} h_j(x) +\sum_{i=0}^j h^i  k^i_j(x)}P^{h,m}(x),
\qquad
P_j^{h,m}(x)= P_{j,0}(x) + h P_{j,1}(x) + \dots + h ^{m}P_{{j,m}}(x),
\ee
$ (h \partial_x - \Phi(x,h))Z_j= O( h^m |Z_j|)$--
equivalently, a parametrix $\Psi^h_m= P^h 
e^{ (h H +K)(y)|_a^x}$,
$H=\diag\{h_j\}$, $K=\diag\{\sum_{i=0}^j h^i k_j^i\}$--
where $a_j=\partial_x h_j$ and $P_{j,0}$ are eigenvalues and eigenvectors of $A(x)$,
and $P^{h,m}:= (P_1^{h,m}, \dots, (P_n^{h,m}) $ denotes the matrix with columns $P_j^{h,m}$.
So long as the eigenvalues $a_j$ remain distinct, so that $P^h$, $(P^h)^{-1}$ may be
taken uniformly bounded, one can convert the formal $h^m$ modeling error
to a rigorous convergence bound by a Lyapunov-Perron type integral equation
mimicking the usual construction of invariant manifolds, in which
$j$th parts of the propagator are integrated against modeling error
along ``progressive contours'' for which $\Re (h_j-h_m)(z)$ 
is nonincreasing for all $m$ \cite{W,O}.
These may be real contours if the eigenvalues
of $A$ maintain a neutral pairwise spectral gap, but in
general are complex, requiring $A$, $B$ analytic.

Difficulties occur at {\it nontrivial turning points}, where eigenvalues of $A$ collide,
{\it and}, for unbounded domains, {\it at $\infty$}, 
where the usual prescription of progressive contours breaks down.
Here, we will seek not to carry out a complete expansion as in \eqref{seriesexp},
but only a block-diagonalization corresponding to invariant subspaces  
with distinct spectra of $A$: essentially a vector version of \eqref{seriesexp}.
This is of course also a preliminary step to full conjugation, decoupling the problem into scalar modes and
irreducible $m\times m$ blocks, $m>1$, containing nontrivial turning points, 
to be analyzed by more special techniques as in, e.g., \cite{W,O}.
Importantly, this includes block-diagonalization at infinity.

\subsubsection{$C^r$ vs $C^\omega$ diagonalization, and diagonalization on unbounded domains}\label{s:challenges}
A new aspect of the 1- and multi-D analyses of detonation stability
in \cite{Z1,LWZ1} was to carry out rigorous WKB-type expansion for \eqref{simple} on unbounded domains. 
This involved (i) effectively resumming the usual series expansion 
to obtain an exact solution at infinity/integrability of modeling error in $x$, and 
(ii) the development in \cite{Z1} of a new
``variable coefficient gap lemma'' for $C^r$-coefficient ODE, generalizing to the semiclassical setting the standard gap lemma of \cite{GZ},
by which one may obtain solutions with desired behavior at infinity so long as the associated eigenvalues of $A(x)$ are (a) semisimple and (b) 
satisfy a neutral numerical range condition roughly corresponding to
a neutral spectral gap (separation of real parts) from other eigenvalues, with $A$ and $B$ converging to their limits at infinity
at $L^1$ rate.
The description is valid globally on the (possibly unbounded) interval where (a)--(b) hold.
However, {both conditions (a)--(b) fail in general for the class of
problems arising in high-frequency stability of detonation waves, as studied, e.g., in \cite{Er,LWZ2}}.

This raises the questions (partially alluded to above) whether: 
(1) the conditions (a)--(b) are indeed necessary for the $C^r$-coefficient problem, or whether there could be provided by different techniques a more general $C^r$-coefficient theorem 
requiring only separation and not spectral gap
of the eigenvalues of $A$, the ``natural'' condition needed for formal WKB expansion, and
(2) the classical local diagonalization results of Wasow \cite{W} (for the analytic-coefficient problem) are sharp, 
or whether they could be extended to a global result valid on the whole interval on which the eigenvalues of $A(x)$ remain
separated.
It is these two questions, and the physical issues originating in detonation that prompt them,
that are the primary practical motivations for our analysis.

\subsection{Main results}\label{s:mainresults} 

\subsubsection{The profile problem, assumptions, and approximate block-diagonalization}\label{s:prof}
Returning to 
the PDE problem 
\eqref{planar}-\eqref{pde}, consider the commonly-occurring case of a front- or pulse-type solution
$
\lim_{z\to \pm \infty}\bar w(z)=w_\pm.
$
Writing the standing-wave ODE as a first-order system
\be\label{twode}
Z'=F(Z), 
\ee
in a phase variable $Z$ consisting of $w$ and appropriate derivatives, we find, so long as $w_\pm$ are nondegenerate 
hyperbolic equilibria, i.e., the Jacobians $dF(w_\pm)$ possess no center subspace, 
that the profile $\bar w$ consists of the projection onto the $w$-coordinate of a profile $\bar Z$, 
$\lim_{z\to \pm\infty} \bar Z(z)=Z_\pm$,
of \eqref{twode}, which in turn corresponds generically to a transversal intersection of the stable manifold at $Z_+$ and the unstable manifold at $Z_-$ of \eqref{twode}.
Assuming that the coefficients of \eqref{pde} are $C^r$, we obtain from this construction a profile $\bar w$ that is $C^{r+1}$ in $x$, and (by standard stable/unstable manifold theorems) converges exponentially in $r$ derivatives to its limits $w_\pm$ as
$x\to \pm \infty$.
Thus, {\it for smooth coefficients $f_\alpha \in C^r$, $C^r$ smoothness and exponential convergence at $\pm\infty$
are natural conditions to impose on $A$ and $B$ in \eqref{semi}.}

For analytic coefficients $f_\alpha\in C^\omega$, we obtain by the same construction a solution $\bar w$ that is analytic for 
all $z\in \R$; however, already at the level of profiles,
the situation is slightly more subtle at $z\to \pm \infty$.
Namely, as we show in Theorem \ref{t:stableman} of Appendix \ref{s:appman}, the stable manifold construction in the
analytic coefficient case yields the much stronger result of existence/analyticity in a {\it wedge}
$$
\Re z\geq 0, \quad  |\Im z|\leq \nu |\Re z|,
$$
with exponential decay $|\bar w(z)|\le C(\tilde \eta)e^{-\tilde \eta|\Re z|}$,
$\nu, \tilde \eta>0$,
and similarly for the unstable manifold at $z\to -\infty$.
{\it Analyticity of $A$ and $B$ on a strip around the real axis and wedges around plus and minus infinity,
with exponential convergence as $\Re z\to \pm \infty$, are thus natural assumptions for \eqref{semi} in the analytic coefficient case.}
This observation so far as we know is new; moreover, 
the strengthened assumptions at $\pm \infty$ turn out to be essential for our treatment of exact block-diagonalization.

Our first main result (following) is that, under the above natural assumptions,
and assuming spectral separation of eigenvalues of the limiting coefficient matrix, there exist {\it global}
approximately block-diagonalizing transformations to all orders, preserving the original assumptions.

\bt [Global approximate diagonalization] \label{t:approx}
Consider a general semiclassical limit problem $hW'=A(x,h)W $, $x\in [a,b]\subset \R$,
$A\in C^r(x,h)$,
such that the eigenvalues of $A(\cdot, 0)$ may be divided into two groups 
separated uniformly in $x,h$ for all $x\in \R$.
Then, for each $1\leq k\leq r$, there exists a uniformly invertible change of coordinates 
$W=\mathcal{T}^k(x,h) W_k$, $\mathcal{T}\in C^r$, yielding the approximately diagonalized system
\ba\label{reducedintro}
hW_k'=\bp A^k_{11}& 0\\0 & A^k_{22}\ep W_k 
+ h^k \bp 0 &  \theta^k_1\\
 \theta^k_2& 0 \ep W,
\ea
$A^k, \theta^k_j \in C^{r-k}$, with $\mathcal{T}^{s+1}= \mathcal{T}^{s} + O(h^{s+1})$.
If $a=-\infty$ or $b=+\infty$ and $A$ is
exponentially converging as $x\to \pm \infty$ in up to $r$ derivatives, uniformly in $x,h$, 
then $A^k$ is exponentially converging and $\theta^k_j$ exponentially
decaying in $r-k$ derivatives as $x\to \pm \infty$, uniformly in $h$.
If, moreover, $A$ is analytic on a strip around the real axis and wedges around plus and minus infinity,
with exponential convergence as $\Re x\to \pm \infty$, then $\mathcal{T}^k$, $ A^k$, $\theta^k_j$ have these properties as well.
\et

\begin{proof}
This is an immediate consequence of Proposition \ref{repeatprop} and Remark \ref{genrmk}, below.
\end{proof}

\br\label{scalarrmk}
For $\sigma(dF(z_\pm))$ real, the proof of Theorem \ref{t:stableman} yields analyticity of profiles on
half-planes $\Re z > M$, $\Re z< -M$, $M>>1$.
This was shown in the scalar case in \cite[Proposition 4.1]{LWZ2} using a direct, Implicit Function Theorem argument.
The example $Z\in \C$, $F(Z)= Z^2-Z$, $\bar Z(x)=-\tanh (x)$ shows that this result is sharp, 
as $\tanh(i\tau)= \tan(\tau)$ has poles at $\tau=\pi/2 + 2\pi j$, $j\in \mathcal{Z}$.
Theorem \ref{t:stableman} extends this to the system case, allowing more general applications;
see for example Remark 2.1 \cite[p. 9]{LWZ2} on treatment of detonations with multi-component reactions.
\er

\subsubsection{Local exact block-diagonalization}\label{s:bd}
Complementing the global approximate diagonalization result of
Proposition \ref{repeatprop}, we have the following {\it local, analytic} exact diagonalization
result,
esentially a
 finite $h$-regularity, finite-accuracy version of Wasow's theorem \cite{W}
in the $h$-analytic case.

\bt[Exact local diagonalization at a finite point]\label{wasow1}
Given an ODE
\ba\label{std}
hW'=\bp A_{11}& 0\\0 & A_{22}\ep W 
+ h^p \Theta W, \qquad p\geq 1,
\ea
$A_{ll}$, $\Theta$ uniformly analytic in $x$ in a complex neighborhood
of $x=x_*$ and continuous in $h$ in a postive real neighborhood of $h=0$,
\emph{with no eigenvalues of $A_{jj}$ in common},
 there exists a coordinate change
$W=T(x,h)Z$, $T=\bp I & h^p \alpha_{12}\\ h^p \alpha_{21}& I\ep$,
such that
$
hZ'=\bp A_{11}+h^{p}\beta_{11}& 0\\0 & A_{22}+h^{p}\beta_{22}\ep Z, 
$
with $\alpha_j$, $\beta_j$
uniformly analytic in $x$ and continuous in $h$ in neighborhoods of $h=0$, $x=x_*$, 
\et

\begin{proof}
See Section \ref{s:exact}.
\end{proof}

An important extension of Theorem \ref{wasow1} for the applications 
we have in mind
is the following result giving
existence of block-diagonalizing conjugators near infinity for a linear system \eqref{std} with
$A_j$, $\Theta$ are analytic in $x$ on the wedge
$W_{M,\beta}: \Re x\geq 0$, $|\Im x|\leq \beta \Re x $,
with 
\be\label{expconv}
|A_j(x,h)-A_j(+\infty,h)|\leq Ce^{-\eta \Re x},
\quad
|\Theta(x,h)|\leq C \; \hbox{\rm on $W_\beta$},
\ee
for some limiting value $A_j(+\infty)$ and constant $C$, uniformy in $h<<1$.

\bt [Exact block diagonalization at infinity] \label{singprop}
For $A_{jj}$, $\Theta$ as in \eqref{expconv},
if limits $A_{jj}(+\infty)$ have no eigenvalues in common, then
there exists a uniformly bounded analytic conjugator $T(\cdot)$
on possibly smaller wedge $W_{M',\beta'}$, for $\beta'$ sufficiently small
and $M'>0$ sufficiently large,
with $|T-\Id|=O(h^p)$.
Moreover, if $\Theta $ is uniformly exponentially convergent
as $Re (x)\to \infty$, then $T$ is as well, and if $\Theta$ is
uniformly exponentially decaying as $\Re (x)\to \infty$, then 
$(T-\Id)$ is also.
\et

\begin{proof} See Section \ref{s:conjinfty}.  \end{proof}

So far as we know, both statement and proof of Theorem \ref{singprop} are new; indeed, as mentioned
earlier, the novel assumption of analyticity on a wedge appears to be essential.
A related problem is existence of block-diagonalizing conjugators
for a {\it singular} ODE system
\ba\label{sing}
zh\frac{dW}{dz}=  \bp A_{11}& 0\\0 & A_{22}\ep(z,h) W 
+ h^p \Theta(z,h) W
\ea
in the vicinity of the singular point $z=0$.
Such problems arise, 
for example, in the treatment of
``hybrid resonance'' or ``X-mode'' heating of fusion plasma \cite{DIW,DIL},
where coincidence of regular-singular and turning points lead to interesting physical phenomena.
A first step in their rigorous analysis
is block reduction to a $2\times 2$ system equivalent to a modified Bessel equation \cite{O}.

\begin{corollary}[Exact block diagonalization at a singular point]\label{singprop2}
	Let $A_{jj}$, $\Theta$ in \eqref{sing} be analytic in $z$ and continuous in $h$ on $B(0,r) \times (0,h_0)\subset \C\
	\times \R^+$ and
$A_{11}(0)$, $A_{22}(0)$ have no eigenvalues in common.
Then, there exists a 
uniformly bounded block-diagonalizing conjugator with uniformly bounded inverse
$T=\Id +O(h^p)$ of \eqref{sing}, analytic in $z$ on a slit ball around $z=0$ with branch cut along
the negative real axis, and continuous at $z=0$.
\end{corollary}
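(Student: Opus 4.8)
The plan is to deduce the corollary from Theorem \ref{singprop} via the logarithmic change of independent variable $z=e^{-x}$, which turns the regular-singular point $z=0$ into the point $x=+\infty$. Since $z\,dW/dz=-dW/dx$ under this substitution, equation \eqref{sing} becomes
\[
h\frac{dW}{dx}=\bp \tilde A_{11}&0\\0&\tilde A_{22}\ep(x,h)\,W+h^p\tilde\Theta(x,h)\,W,\qquad
\tilde A_{jj}(x,h):=-A_{jj}(e^{-x},h),\ \ \tilde\Theta(x,h):=-\Theta(e^{-x},h),
\]
a system of exactly the type treated in Theorem \ref{singprop}. First I would check that the hypotheses \eqref{expconv} transfer: because $A_{jj},\Theta$ are analytic in $z$ on $B(0,r)$, the composed functions $x\mapsto A_{jj}(e^{-x},h)$, $x\mapsto\Theta(e^{-x},h)$ are analytic on the half-plane $\Re x>\log(1/r)$, hence on any wedge $W_{M,\beta}$ with $M>\log(1/r)$; and, using that $A_{jj}$ is (uniformly in $h$) Lipschitz at $z=0$ on a slightly smaller ball, one gets $|\tilde A_{jj}(x,h)+A_{jj}(0,h)|\le Ce^{-\Re x}$ and $|\tilde\Theta(x,h)|\le C$ there, so \eqref{expconv} holds with $\tilde A_{jj}(+\infty,h)=-A_{jj}(0,h)$ and $\eta=1$. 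The spectral-separation hypothesis transfers verbatim, since $\sigma(-A_{jj}(0))=-\sigma(A_{jj}(0))$, so $A_{11}(0),A_{22}(0)$ have no common eigenvalue iff $-A_{11}(0),-A_{22}(0)$ do not.

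Next I would apply Theorem \ref{singprop} to the transformed system to obtain, on a smaller wedge $W_{M',\beta'}$ with $\beta'$ small and $M'$ large, a uniformly bounded analytic block-diagonalizing conjugator $T(x,h)$ with uniformly bounded inverse and $|T-\Id|=O(h^p)$; and since $\tilde\Theta$ is uniformly exponentially convergent as $\Re x\to\infty$ (again because $\Theta$ is analytic at $z=0$), the refinement in Theorem \ref{singprop} gives that $T$ converges exponentially to a limit $T_\infty(h)$ as $\Re x\to\infty$. Then I would pull back, setting $\mathcal{T}(z,h):=T(-\log z,h)$: block-diagonalization of a linear ODE transforms covariantly under a change of the independent variable, so $W=\mathcal{T}(z,h)Z$ block-diagonalizes \eqref{sing}, with the block-diagonal coefficient matrix being $-\tilde M_d$ written back in $z$. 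As $\mathcal{T}$ is the composition of the analytic $T$ with the analytic map $z\mapsto-\log z$ on the slit plane, it is analytic on the corresponding slit ball; one has $|\mathcal{T}-\Id|=O(h^p)$ and $\mathcal{T}^{-1}=\Id+O(h^p)$ uniformly in $h$; and $|\mathcal{T}(z,h)-T_\infty(h)|\le C|z|^{\eta}\to0$, so $\mathcal{T}$ extends continuously to $z=0$ (with $|T_\infty-\Id|=O(h^p)$, consistently).

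Two points require care rather than genuine difficulty. First, one must verify that $W_{M',\beta'}$ pulls back to an honest slit ball and not a cuspidal region: the condition $|\Im x|\le\beta'\Re x$ reads $|\arg z|\le\beta'\log(1/|z|)$, whose right-hand side exceeds $\pi$ once $|z|<e^{-\pi/\beta'}$, so on $B(0,r')\setminus(-r',0]$ with $r':=\min(e^{-M'},e^{-\pi/\beta'})$ the only surviving constraint is $|\arg z|<\pi$, i.e.\ we recover the full ball slit along the negative real axis. Second, continuity at $z=0$ genuinely uses the exponential-convergence refinement of Theorem \ref{singprop}; mere uniform boundedness of $T$ would not produce a continuous extension. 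I would also remark that the branch cut is unavoidable in general: although $\tilde A_{jj},\tilde\Theta$ are $2\pi i$-periodic in $x$, the conjugator furnished by Theorem \ref{singprop} is built on a wedge and need not respect this periodicity, so $\mathcal{T}$ is generically multivalued around $z=0$. The \textbf{main obstacle} is therefore not in this corollary at all—it was already handled in the proof of Theorem \ref{singprop}—and the remaining work here is the bookkeeping of the change of variables and of the geometry of the transformed domain.
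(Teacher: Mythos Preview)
Your proposal is correct and follows essentially the same approach as the paper: the substitution $x=-\log z$ reducing to Theorem~\ref{singprop}, then pulling back the resulting conjugator. You supply more detail than the paper does---in particular the explicit verification that the wedge $W_{M',\beta'}$ pulls back to a genuine slit ball once $|z|<\min(e^{-M'},e^{-\pi/\beta'})$, and the observation that continuity at $z=0$ relies on the exponential-convergence refinement---and you correctly track the sign arising from $z\,dW/dz=-dW/dx$, which the paper's displayed formula omits.
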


\begin{proof} This follows from Theorem \ref{singprop} via the transformation
$z\to x=-\ln z$; see Section \ref{s:conjsing}.  \end{proof}

\br\label{coordrmk}
Example \ref{sliteg} below shows that 
the conclusions of Corollary \ref{singprop2}
are sharp even for $A_{jj}$, $\Theta$ independent of $h$ and analytic at $z=0$;
specifically, there need not then exist a diagonalizer that is analytic on a neighborhood of $z=0$,
despite a formal power series solution to all orders.
At the same time, the hypotheses of analyticity in $z$ on $B(0,r)$ may be weakened to 
analyticity in a neighborhood of the origin on the Riemann surface with branch cut along the negative real axis.
\er

\subsubsection{Oscillatory integrals and counterexamples}\label{s:counter}
Consider the $2\times 2$ triangular system
\ba\label{ntrisys}
hW'=
\mathcal{A}(x,h)W:=
\bp \lambda_1(x)& h^p \theta(x)\\0 & \lambda_2(x)\ep W , \qquad W\in \C^2,\; p\geq 1,
\ea
$\theta$ uniformly bounded, with globally separated eigenvalues $\lambda_1(x)= x+i$, $\lambda_2=-(x+i)$.


\bl\label{l:oscequiv}
There exists $T(x,h)$ on $[-L,L]\subset \R$, $0\leq h\leq h_0$,
$T,$ $T^{-1}$ uniformly bounded in $C^1$, for which 
$W=TZ$ converts \eqref{ntrisys} to a diagonal system $hZ'=D(x,h) Z$, if and only if
\be\label{e:onlyif}
\hbox{\rm $\int_{-x}^x e^{-y^2/h- 2iy/h} \theta(y) dy \lesssim h e^{-x^2/h}$
for all $|x|\leq L$.  }
\ee
\el

\begin{proof}
See Section \ref{s:tri}.
\end{proof}

Through condition \eqref{e:onlyif}, we obtain the following counterexamples.

\bc[Failure of global conjugators]\label{c:global}
For $\theta\not \equiv 0$ analytic on 
$[-L,L]\times [-i,i]$, \eqref{ntrisys} possesses a 
uniformly bounded $C^1$ conjugator on $[-L,L]$ as $h\to 0^+$ if $L<1$ and only if $L\leq 1$;
for $L=1$, it possesses a uniformly bounded conjugator on $[-L,L]$ if and only if $\theta(-i)=0$.
\ec

\bc[Failure of local conjugators for $C^\infty$ coefficients]\label{c:cinfty}
Let $\theta \in C^\infty$ be given by  $\theta(x)=e^{-x^{-\theta}}$ for $x>0$ and $0$ for $x\leq 0$, $\theta>0$. 
Then, \eqref{ntrisys} possesses no uniformly bounded $C^1$ conjugator on any interval $[-L,L]$, $L>0$. 
\ec

Corollaries \ref{c:global} and \ref{c:cinfty} follow in turn from the following estimates proved in Section \ref{s:someosc}.

\bl\label{quadstat}
For $a\not \equiv 0$ analytic on $[-L,L]\times [-i,i]$, and $h\to 0^+$, 
\be\label{e:quadtrans}
\int_{-x}^x e^{-y^2/h- 2iy/h} a(y) dy 
\begin{cases}
	\lesssim h e^{-\frac{x^2}{h}}, & 0<x\leq L < 1,\\
	\sim h^{1/2}a(-i) e^{-\frac{1}{h}} + O(h e^{-x^2/h}) , &  |x|\leq L=1,\\
	\sim h^{(j+1)/2} e^{-\frac{1}{h}}, & 1< c_0\leq x\leq L,\\
\end{cases}
\ee
where $j$ is the order of the first nonvanishing derivative of $a$ at $z=i$.
\el

\bl\label{halfprop}
For $0<c_0\leq x\leq \infty$, 
$\theta= \frac{1}{s-1}\in (0,+\infty)$,
$a(y):= e^{-y^{-\theta}}$ for $y>0$ and $0$ for $y\leq 0$,
\be\label{element}
\int_{-x}^x e^{-y^2/h-2iy/h} a(y) dy \sim
h^{1-1/2s} e^{\frac{-c(s)+ d(s) h^{1-1/s} + O(h^{2(1-1/s)}) }  {h^{1/s}} } , \quad 1<s<\infty,
\ee
as $h\to 0^+$, where $c(s)>0$ and $\Re d(s)=-\cos(\pi(1-1/s))$ is $<0$ for $s<2$.
\el

The symbol $a(y):= e^{-y^{-\theta}}$ for $y>0$, $a(y)=0$ for $y\leq 0$ 
is of Gevrey class $\mathcal{G}^{s,T}$ \cite{Le}, \cite[Rmk. 1.3, p. 3]{KV},
defined by boundedness of the Gevrey norm
$
 \| a\|_{s,T}:= \sup_{j}|\partial_x^j a| (j!)^s/T^{j}
$
for some $T$, with $s=1$ corresponding to analyticity on a strip of width $T$ about the real axis $\R$.
The contrast between \eqref{quadstat} and \eqref{element}
reflects a difference in stationary phase-type estimates for analytic vs. $C^\infty$ symbols $a$,
as quantified in the following more general observation, of interest in its own right.

\bpr\label{p:gevrey}
For $a\in \mathcal{G}^{s,T_0}$ on $[-L,L]$, $T_0, T>1$, 
$|x|\leq L$, and some $c=c(T_1,T,s)>0$,
\be\label{comp}
\int_{-x}^x e^{-y^2/h- 2iy/h} a(y) dy \lesssim h^{1/2} \|a\|_{T,s}  e^{-c/h^{1/s}}.
\ee
\epr

Proposition \ref{p:gevrey} interpolates between the algebraic $O(h^{r})$ van der Korput bounds for $C^r$ symbols 
(roughly, $s=\infty$) and the exponential $O(h^{1/2}e^{-1/h})$ bounds for analytic symbols $a$ obtained by the saddlepoint method/analytic stationary phase, as described in Appendix \ref{s:stat}; so far as we know, this observation also is new.
The lower bounds of Lemma \eqref{element} show that \eqref{comp} is sharp.
Specifically, for $s\geq 2$, $\frac{h^\alpha}{h^{1-\alpha}}= h^{1-2/s}\leq 1$, yielding
$ \int_{-x}^x e^{-y^2/h-2iy/h} a(y) dy \sim h^{1-1/2s} e^{-c(s) /  h^{1/s} }; $
for $1<s<2$, the bound is sharp up to the slower-decaying exponential factor
$e^{( d(s) h^{1-1/s} + O(h^{2(1-1/s)}) )   /h^{1/s} }$.

\br[Failure for analytic projectors]\label{projrmk}
Replacing the diagonal entries of \eqref{ntrisys} by $\lambda_1=x$, $\lambda_2=-x$,
we find by the same argument as for Lemma \ref{l:oscequiv} that diagonalization is possible on $[-L,L]$ if and only if
$\int_{-x}^x e^{-y^2/h} \theta(y) dy \lesssim h e^{-x^2/h}$
for all $|x|\leq L$, which clearly fails.  
Thus, \emph{even in the case that analytic projectors persist,} failure of spectral separation can lead to nonexistence
of an exact diagonalizing transformation.
Note that the radius $|x|=1$ of existence of diagonalizing transformations in Corollary \ref{c:global} corresponds in the complex plane to the radius at which there appears a point $z=-i$ at which $\lambda_1=\lambda_2$ and separation fails,
which is simultaneously a stationary point for the phase $\phi=\int(\lambda_1-\lambda_2)$ appearing in \eqref{e:onlyif}.
Interestingly, the condition $\theta(-i)=0$ determining extensibility up to radius $1$ is the condition that
$\mathcal{A}(-i,h)$ in \eqref{ntrisys} be diagonalizable.
\er

\br\label{eqrmk}
It is an interesting question whether there holds a general lower bound 
in \eqref{comp}, in which case \eqref{comp} would represent an alternative characterization of 
Gevrey class in terms of the F.B.I. transform, analogous to characterizations in terms of the Fourier transform
as, e.g., in \cite{FT}.
\er


\subsection{Conclusions}\label{s:conclusion}
The above results determine what is 
possible in various settings in terms of approximate or exact block-diagonalization, 
in particular settling the two open problems posed by Wasow in his 1965 text \cite{W}
on asymptotic ODE whether global exact diagonalization is possible and whether analyticity of coefficients is necessary.
Namely, we see that, in general, even in the simplest case of a system $hZ'=A(x)Z + hB(x)Z$ 
for which $A$ possesses globally separated eigenvalues,
there exists (only) a finite collection of exact {\it locally diagonalizing } transformations
on neighborhoods covering the domain $[a,b]$ of definition of $A$, $B$,
and this holds in general {\it only for $A$ and $B$ analytic}.

\medskip
{\bf Acknowledgment.}
Thanks 
to Gilles Lebeau and Jean-Marc Delort for stimulating conversations.  
Thanks to
University of Indiana, Bloomington, University of North Carolina Chapel Hill, Universities of Paris 7 and 13, ENS Ulm,
and the Fondation Sciences Math\'ematiques de Paris for their hospitality during visits in which this research was partially carried out.

\section{Repeated diagonalization and exact local block-diagonalization}

In this section, we compare two methods for obtaining a block-diagonal system from a given semiclassical system of ODE,
the first approximate, and the second exact.
The first can be used to compute the second to arbitrary order, while the second gives rigorous validation to the first.
The approximate block-diagonaization is done globally; the exact block-diagonalization is local.
Novel aspects of our analysis are the treatment of the point at infinity and of regular-singular points.

\subsection{The method of repeated diagonalization}\label{s:repeat}
We start by recalling the method of repeated
diagonalization as implemented in \cite{MaZ},
by which one may obtain from an approximately block-diagonal system
with \emph{spectrally separated blocks}
a series of approximately block-diagonal systems
of successively higher accuracy.
For related methods, see, for example, \cite{L,F,W,E,BEEK}.
Consider an approximately block-diagonal ODE 
in the semiclassical limit $h\to 0^+$:
\ba\label{stdr}
hW_j'=\bp A^j_{11}& 0\\0 & A^j_{22}\ep W_j 
+ h^j \bp 0 &  \theta^j_1\\
 \theta^j_2& 0 \ep W_j,
\qquad x\in [a,b]\subset \R.
\ea

\bpr[Adapted from \cite{MaZ}]\label{repeatprop}
Let $A^j_{ll}(x,h), \theta^j_l (x,h)\in C^r(x)$ and $C^0(h)$ on $[a,b]\times [0,h_0]$, uniformly
in both coordinates, with $r>j\geq 1$, $l=1,2$.
Suppose, moreover, that $A^j_{ll}$ have no eigenvalues in common.  Then, 
for $j\leq k\leq r+j$, $l=1, 2$, and $h>0$ sufficiently small, 
there exists series of transformations $W_{k}=T_k W_{k+1}$,
$
T_k:=\bp \Id & h^kc^k_1\\
 h^kc^k_2 & \Id \ep
\in C^{r+j-k}$,
converting \eqref{stdr} to
\ba\label{reduced}
hW_k'=\bp A^k_{11}& 0\\0 & A^k_{22}\ep W_k 
+ h^k \bp 0 & h^k \theta^k_1\\
h^k \theta^k_2& 0 \ep W.
\ea
If $a=-\infty$ or $b=+\infty$ and $\theta^j_l$ are exponentially decaying in up to $r$
derivatives as $x\to \pm \infty$, uniformly in $h$, then $\theta^k_l$ are 
exponentially decaying in up to $r+j-k$ derivatives as $x\to \pm \infty$, uniformly in $h$.
If $A$ is analytic on a strip around the real axis and wedges around plus and minus infinity,
with exponential convergence as $\Re x\to \pm \infty$, then $T_k$, $ A_k$, $\theta^k_j$ have these properties as well.
\epr

\begin{proof} We proceed by induction from $k=r+1$ up to $K$, at each step defining
$T_k$ such that 
\be\label{blockdiag}
\hbox{\rm $D_k:=T_k^{-1}A_{k-1}T_k$ is block-diagonal,}
\ee
and setting 
\ba\label{approxdefs}
A_k&= T_k^{-1}A_{k-1}T_k - h\, \blockdiag \{ T_k^{-1}\partial_x T_k \},\\
\theta_k&= h^{1-k} \Big( -T_k^{-1}\partial_x T_k + \blockdiag \{ T_k^{-1}\partial_x T_k \} ,
 \ea
 from which we obtain, evidently, \eqref{reduced}.

 To complete the proof, it remains to show that, for $\theta$ bounded and $h>0$ sufficiently small,
 \eqref{blockdiag} has a unique solution of form
 $T_k=\bp \Id & \theta_1^k\\ \theta_2^k& \Id \ep$, depending in $C^1$ fashion on $\theta$.
	A straighforward calculation equating first diagonal, then off-diagonal blocks
	in the equation $T_kD_{k-1}= A_{k-1} T_k$, yields the equivalent system
		\ba\label{nricatti}
		A^{k-1}_{11}\alpha^k_{12}- \alpha^k_{12}A^{k-1}_{22} +  
		\Theta^{k-1}_{12}- h^{2k}\alpha^k_{12} \Theta^{k-1}_{21}\alpha^k_{21}- h^k \Theta^{k-1}_{11} \alpha^k_{21}&=0 ,\\
		A^{k-1}_{22}\alpha^k_{21}- \alpha^k_{21}A^{k-1}_{11} +  
		\Theta^{k-1}_{21}- h^{2k}\alpha^k_{21} \Theta^{k-1}_{12}\alpha^k_{12}- h^k \Theta^{k-1}_{22} \alpha^k_{21}&=0 ,\\
\ea
or, written in block vector form in terms of $\alpha=(\alpha_{12},\alpha_{21})$,
\be\label{nvecversion}
\mathcal{F}(\alpha_k, \Theta_{k-1}, h):= \cA \alpha_k +  Q(\alpha_k,\Theta_{k-1},h)=0,
\ee
where 
\be\label{Adef}
\cA \bp \alpha_{12}\\ \alpha_{21} \ep:= \bp A^{k-1}_{11} \alpha_{12}- \alpha_{12} A^{k-1}_{22}\\ 
A^{k-1}_{22} \alpha_{21}- \alpha_{21} A^{k-1}_{11}\ep ,
\ee
\be \label{nQbds}
Q(\alpha,\Theta,h)=O(|\Theta|)(1+h^k|\alpha|+h^{2k}|\alpha|^2),
\qquad
Q_\alpha(\alpha,\Theta,h)=O(|\Theta|)(h^k +h^{2k}|\alpha|) .
\ee

From \eqref{Adef}--\eqref{nQbds}, we have $\mathcal{F}(0,0,h)\equiv 0$,
while, assuming uniform boundedness and uniform separation of the spectra of 
$A^{k-1}_{11} $ and $A^{k-1}_{22} $, the decoupled linear operator
$\partial_\alpha\mathcal{F}(0,0,h)=\mathcal{A}$ is uniformly invertible, whence we obtain by the Implicit
Function Theorem existence of a unique small solution $\alpha_k=\mathcal{G}(\Theta_{k-1},h)$, smooth in both variables. 
Noting that the properties of uniform boundedness and uniform separation of the spectra of 
$A^{k}_{11}=A^{k-1}_{11}+O(h^k) $ and $A^{k}_{22}= A^{k-1}_{22}+ O(h^k) $ persist (by smallness of $h$) throughout
the iteration, we are done.
\end{proof}

\br\label{concatrmk}
The repeated diagonalization expansion may be seen to be a block-diagonal
version of the classical WKB expansion \eqref{seriesexp},
with $P^{h,m}$ analogous to the concatenation 
\be\label{Tcon}
\mathcal{T}_{m,h}:=
T_m\cdot T_{m-1}\cdot \dots T_{j+1}= \mathcal{T}_{m-1,h}+O(h^m).
\ee
By Remark \ref{genrmk}, the process can be repeated until the system 
approximately decouples into distinct blocks whose eigenvalues all collide: 
that is, which are either scalar or else possess nontrivial turning points.
In the case that the initial coefficient matrix has everywhere 
distinct eigenvalues, the result is a complete decomposition into scalar modes analogous to the WKB expansion \eqref{seriesexp}.
\er

\br\label{genrmk}
A general semiclassical limit problem $ hW'=A(x,h)W $
may, by standard spectral perturbation theory \cite{K},
be converted to form \eqref{stdr}, $j=1$, by an initial block-diagonalizing
transformation, $W =\hat TW_1$ such that $\hat T^{-1}A\hat T=\bp A_{11} & 0\\ 0 & A_{22}\ep$,
yielding \eqref{stdr} with $A^1_{jj}=A_{jj}-h(\hat T^{-1}\hat T_x)_{jj}$,
$\theta_1^1= -(\hat T^{-1}\hat T_x)_{12}$, $\theta_1^2= -(\hat T^{-1}\hat T_x)_{21}$,
so long as there exist two groups of eigenvalues of 
$A$ that remain separated uniformly in $x,h$.
Specifically, we may take $\hat T=(\hat T_1, \hat T_2)$, where the columns of $\hat T_j$ are bases of the
associated total eigenspaces of these two groups, defined by Kato's ODE 
\be\label{KODE}
(d/dx)\hat T_j= [\Pi_j,(d/dx)\Pi_j]\hat T_j,
\ee
where $\Pi_j$ denote the associated eigenprojections and $[M,N]:=MN-NM$ the usual matrix commutator \cite{K}.
Thus, there is no loss of generality in starting with the form \eqref{stdr}.
The projectors $\Pi_j$, hence the solutions $\hat T_j$ of the linear ODE \eqref{KODE}, inherit the same regularity in
$(x,h)$ possessed by the original coefficient $A(x,h)$.
Moreover, {\it if $A(\cdot, h)$ exponentially approaches limits as $x\to \pm \infty$, then $\Pi_j$ and thus $\hat T$,
converge at the same rate, and $\theta_j\lesssim (d/dx) \hat T_j$ decay exponentially.}
The transformation $\hat T$ is typically not explicitly 
computable, but may (similarly
as in the WKB expansion of Section \ref{s:turn}) be expressed as a matrix perturbation series in $h$.
\er

\subsection{Exact analytic local block-diagonalization}\label{s:exact}
Exact diagonalization at a finite point follows similarly as in \cite{W}, as we now describe.

\begin{proof}[Proof of Theorem \ref{wasow1}]
	Take without loss of generality $x_*=0$,
	so that we seek a block-diagonalization near $x=0$.
A straighforward calculation equating first diagonal, then off-diagonal blocks
in the equation $(hT'+TD)Z= ATZ$, 
$D=\bp A_{11}+h^p\beta_{11} & 0\\0 &  A_{22}+h^p\beta_{22} \ep$,
		yields Ricatti equations
		\ba\label{ricatti}
h \alpha_{12}'&= A_{11}\alpha_{12}- \alpha_{12}A_{22} +  
		\Theta_{12}- h^{2p}\alpha_{12} \Theta_{21}\alpha_{21}- h^p \Theta_{11} \alpha_{21} ,\\
h \alpha_{21}'&= A_{22}\alpha_{21}- \alpha_{21}A_{11} +  
		\Theta_{21}- h^{2p}\alpha_{21} \Theta_{12}\alpha_{12}- h^p \Theta_{22} \alpha_{21} ,\\
\ea
with
\ba\label{betas}
\beta_{11} &= \Theta_{11} + h^p \Theta_{12}\alpha_{21} ,\\
\beta_{22} &= \Theta_{22} + h^p \Theta_{21}\alpha_{12} .\\
\ea
Viewed as a block vector equation in $\alpha=(\alpha_{12},\alpha_{21})$, \eqref{ricatti} has form
$ h\alpha'= \cA \alpha + Q(\alpha,\Theta,h)$, or
\be\label{vecversion}
 h\alpha'= \cA(0) \alpha +
 (\mathcal{A}(z)-\mathcal{A}(0))\alpha +  Q(\alpha,\Theta,h),
\ee
where
\be\label{calA}
\cA \bp \alpha_{12}\\ \alpha_{21} \ep:= \bp A_{11} \alpha_{12}- \alpha_{12} A_{22}\\ 
A_{22} \alpha_{21}- \alpha_{21} A_{11}\ep ,
\ee
\be \label{Qbds}
Q(\alpha,\Theta,h)=O(|\Theta|)(1+h^p|\alpha|+h^{2p}|\alpha|^2),
\qquad
Q_\alpha(\alpha,\Theta,h)=O(|\Theta|)(h^p +h^{2p}|\alpha|) .
\ee

The eigenvectors of $ \cA(0) $ may be expressed as tensor products 
$\alpha= \bp \phi_1 \tilde \phi_2^*\\ 0\ep$ and $\bp 0\\ \tilde \phi_3  \phi^*_4\ep $
of eigenvectors of $A_{11} $ and
$A_{22}^* $, and the corresponding eigenvalues as the differences in the eigenvalues associated with $\phi_1$, $\phi_2$
and $\phi_3$, $\phi_4$.
Thus, by separation of eigenvalues of $A_{11}$ and $A_{22}$, we find that that 
\emph{$\mathcal{A}(0)$ has no zero eigenvalues.}
It follows that there is $\gamma\in \C$, $|\gamma|=1$, with argument arbitrarily close to zero,
for which \emph{$\gamma \mathcal{A}(0)$ has no center subspace.}
Denoting by $\Pi_U$/$\Pi_S$ the unstable/stable projectors of $\gamma \mathcal{A}(0)$, we thus have, for
$z$ with direction sufficiently close to that of $\pm \gamma$ and $\eta, C>0$: 
\ba \label{expbd}
|e^{h^{-1}\cA(0) z}\Pi_S| &\le Ce^{-h^{-1}\eta \Re z } \quad \hbox{\rm for $\Re z \geq 0$,}\\
|e^{h^{-1}\cA(0) z}\Pi_U| &\le Ce^{h^{-1}\eta \Re z }, \quad \hbox{\rm for $\Re z \leq 0$.}
\ea

Defining now $\Pi_U\alpha=0$ at $z_*:= -M \gamma$ and  $\Pi_S\alpha=0$ at $z^*:= M \gamma$ for 
$M>0$ real,
we obtain by Duhamel's principle the integral fixed-point equation (suppressing dependence of $\alpha $ on $h$):
\ba \label{inteqn}
\alpha(x)=  \CalT \alpha(x)&:=
  h^{-1}\int_{z_*}^x e^{h^{-1}\cA(0) (x-y)}
  \Pi_U \big( (\mathcal{A}(y)- \mathcal{A}(0))\alpha(y) + Q(\alpha,\Theta,h)(y) \big) \,dy \\
 &\quad +  h^{-1}\int_{z^*}^x e^{h^{-1}\cA(0) (x-y)}
  \Pi_S \big( (\mathcal{A}(y)- \mathcal{A}(0))\alpha(y) + Q(\alpha,\Theta,h)(y) \big) \,dy 
,
\ea
defined on the diamond 
\be\label{diamond}
\mathcal{D}:=\{ x:\, |\arg \big((x-z_*)/\gamma\big)|, \; |\arg \big( (z^*-x)/\gamma \big)|\leq \eps \}
\ee
containing a neighborhood of the origin, with $0< \eps, M \ll 1$.
Noting that $(x-y)$ has angle arbitrarily close to that of $\pm \gamma$ for $x\in \mathcal{D}$ and $y\in [z_*,x],
[x,z^*]$ for $\eps >0$ sufficiently small, 
we have by \eqref{expbd} the bounds
	$|e^{h^{-1}\cA(0) (x-y)}\Pi_S| \le Ce^{-h^{-1}\eta \Re (x-y) }$ and
	$|e^{h^{-1}\cA(0) (x-y)}\Pi_U| \le Ce^{h^{-1}\eta \Re (x-y) }$ 
in \eqref{inteqn},
whence we obtain by integrability of $h^{-1}e^{-h^{-1} \eta t }$ over $t\in \R^+$ the estimates
\ba\label{contest}
\|\CalT(\alpha)\|_{L^\infty(\mathcal{D})}& \leq
C \|\mathcal{A}-\mathcal{A}(0)\|_{L^\infty(\mathcal{D})} \|\alpha\|_{L^\infty(\mathcal{D})} \\
&\quad +
C\|\Theta\|_{L^\infty(\mathcal{D})}
\Big( 1 +  h^p \|\alpha\|_{L^\infty(\mathcal{D})} 
+h^{2p} \|\alpha\|_{L^\infty(\mathcal{D})}^2\Big),\\ 
\|\CalT(\alpha_1-\alpha_2)\|_{L^\infty(\mathcal{D})}& \leq
C\Big( \|\mathcal{A}-\mathcal{A}(0)\|_{L^\infty(\mathcal{D})} 
+ \|\Theta\|_{L^\infty(\mathcal{D})} \big( h^p +h^{2p} \sup_j \|\alpha_j\|_{L^\infty(\mathcal{D})}\big)\Big)\\
&\quad \times
\|\alpha_1-\alpha_2\|_{L^\infty(\mathcal{D})}.
\ea
Taking $M$, hence $|\mathcal{A}-\mathcal{A}(0)|$, and $h$ sufficiently small, we find from \eqref{contest}(i)
that $\mathcal{T}$ takes the ball $ B(0, 2C\|\Theta\|_{L^\infty(\mathcal{D})}) $ to itself, and from 
\eqref{contest}(ii) that 
$\mathcal{T}$ is contractive on $ B(0, 2C\|\Theta\|_{L^\infty(\mathcal{D})}) $ with contraction constant $<1/2$. 
It follows that \eqref{inteqn} determines
a unique solution for $M, \eps \ll 1$, which,
moreover, is bounded as claimed.
Regularity with respect to parameters is inherited as usual
through the fixed-point construction via the Implicit Function Theorem.
\end{proof}

\subsection{Exact block-diagonalization at infinity}\label{s:conjinfty}
Exact diagonalization at infinity follows similarly, but with an important modification having to do with restriction to a wedge.
The argument fails for functions merely analytic on a strip; analyticity on a full wedge is needed
in the proof.

\begin{proof}[Proof of Theorem \ref{singprop}]
Under assumptions \eqref{expconv}, consider the equation
\ba \label{singinf}
h\frac{dW}{dx}=  \bp A_{11}& 0\\0 & A_{22}\ep(x,h) W 
+ h^p \Theta( x,h) W.
\ea
Following the same steps as in the proof of Theorem \ref{wasow1},
we obtain as in \eqref{vecversion} equation
\be\label{vecinfty}
h \alpha'= \cA(\infty) \alpha +O(|\cA(z)-\cA(\infty|)\alpha +  Q(\alpha,\Theta) 
\ee
for entries $\alpha=(\alpha_{12},\alpha_{21})$
of a diagonalizing transformation
$
T=\bp I & h^p\alpha_{12}\\ h^p \alpha_{21}& I\ep,
$
$\mathcal{A}$ as in \eqref{calA}.

Again, we note that the eigenvalues of $\mathcal{A}(\infty)$, formed 
by differences in eigenvalues of $A_{11}(\infty)$ and $A_{22}(\infty)$, do
not include the value zero.
Choosing $\eps>0$ so that there are no eigenvalues of $\mathcal{A}(\infty)$ on the rays with angle $\pm (\pi/2+\eps)$,
we can thus divide the eigenvalues of $\cA(\infty)$ among three subsets 
in a way that that persists under small variations in $h$ (or the suppressed
parameter $q$ of \eqref{semi}), namely:
I. eigenvalues with argument lying strictly between 
$\pi/2+ \eps$ and $3\pi/2 -\eps$;
II. eigenvalues with argument strictly between $- \eps$ and  $\pi/2+\eps$;
and 
III.  eigenvalues with argument strictly between $-\pi/2-\eps$ and  $ \eps$,
with associated projectors $\Pi_I$, $\Pi_{II}$, and $\Pi_{III}$.

Taking now $\eps>0$ sufficiently small,
and defining the wedge
\be\label{wedge}
\mathcal{W}_{M',\eps}: \{x: -\eps \leq \arg (x-M')\leq \eps\},
\ee
and direction vectors $\gamma_\pm := e^{\pm i\eps}$,
and taking $M'>0$ sufficiently large, we can, similarly as in the finite case, 
obtain $\alpha$ as the unique solution in $L^\infty(\mathcal{W}_{M',\eps})$, of the fixed-point equation
\ba \label{fixedinfty}
\alpha(x)=  \CalT \alpha(x)&:=
  h^{-1}\int_{M'}^x e^{h^{-1}\cA(\infty) (x-y)}
\Pi_I \big( (\mathcal{A}(y)-\mathcal{A}(\infty)\alpha))+  Q(\alpha,\Theta,h)(y) \big) \,dy \\
&\quad +  h^{-1}\int_{x+\gamma_-( +\infty) }^x e^{h^{-1}\cA(\infty) (x-y)}
\Pi_{II} \big( 
(\mathcal{A}(y)-\mathcal{A}(\infty)\alpha)
+  Q(\alpha,\Theta,h)(y) \big) \,dy \\
&\quad +  h^{-1}\int_{x+\gamma_+(+\infty)}^x e^{h^{-1}\cA(\infty) (x-y)}
\Pi_{III} \big( (\mathcal{A}(y)-\mathcal{A}(\infty)\alpha) +  Q(\alpha,\Theta,h)(y) \big) \,dy \, , \\
\ea
where the contour integrals are understood to be along straight lines.

For, noting that multiplication of $\mathcal{A}(\infty)$ by $\gamma_\pm$ rotates its spectrum by angle $\pm \eps$, 
and recalling that eigenvalue in region II have angle strictly between $-\eps $ and $ \pi/2 + \eps$, 
we have that $\gamma_-\mathcal{A}(\infty) \Pi_{II}$ restricted to range of $\Pi_{II}$ has eigenvalues of angle
strictly between $-2 \eps$ and $\pi/2$ and, similarly,
$\gamma_+\mathcal{A}(\infty)\Pi_{III}$ restricted to range of $\Pi_{III}$
has eigenvalues of angle strictly between $-\pi/2 $ and $2\eps$, whence for $\eps$
less than $\pi/2$ both have eigenvalues of strictly postive real part. 
Likewise, for $-\eps\leq \theta\leq \eps $, 
$e^{i\theta} \mathcal{A}(\infty)\Pi_{I}$ restricted to range of $\Pi_{I}$ has eigenvalues of strictly negative real part.
It follows that, analogously to \eqref{expbd},
\ba \label{2expbd}
|e^{h^{-1}e^{i\theta} t \cA(0) }\Pi_I| &\le Ce^{-h^{-1}\eta t } \quad \hbox{\rm for $|\theta|\leq \eps$, $t \geq 0$ real,}\\
|e^{h^{-1}\gamma_+ t \cA(0) }\Pi_{II}| &\le Ce^{h^{-1}\eta t }, \quad \hbox{\rm for $t\leq 0$ real,}\\
|e^{h^{-1}\gamma_- t \cA(0) }\Pi_{III}| &\le Ce^{h^{-1}\eta t }, \quad \hbox{\rm for $t\leq 0$ real},\\
\ea
for some $C, \eta>0$, yielding analogously to \eqref{contest}
\ba\label{ncontest}
\|\CalT(\alpha)\|_{L^\infty(\mathcal{W}_{M',\eps})}& \leq
C \|\mathcal{A}-\mathcal{A}(\infty)\|_{L^\infty(\mathcal{W}_{M',\eps})} \|\alpha\|_{L^\infty(\mathcal{W}_{M',\eps})} \\
&\quad +
C\|\Theta\|_{L^\infty(\mathcal{W}_{M',\eps})}
\Big( 1 +  h^p \|\alpha\|_{L^\infty(\mathcal{W}_{M',\eps})} 
+h^{2p} \|\alpha\|_{L^\infty(\mathcal{W}_{M',\eps})}^2\Big),\\ 
\|\CalT(\alpha_1-\alpha_2)\|_{L^\infty(\mathcal{W}_{M',\eps})}& \leq
C\Big( \|\mathcal{A}-\mathcal{A}(\infty)\|_{L^\infty(\mathcal{W}_{M',\eps})} 
+ \|\Theta\|_{L^\infty(\mathcal{W}_{M',\eps})} \big( h^p +h^{2p} \sup_j \|\alpha_j\|_{L^\infty(\mathcal{W}_{M',\eps})}\big)\Big)\\
&\quad \times
\|\alpha_1-\alpha_2\|_{L^\infty(\mathcal{W}_{M',\eps})}.
\ea
Taking $h$ sufficiently small and $M'$ sufficiently large, hence $|\mathcal{A}-\mathcal{A}(\infty)|$ sufficiently small, 
we find from \eqref{ncontest}(i)
that $\mathcal{T}$ takes the ball $ B(0, 2C\|\Theta\|_{L^\infty(\mathcal{W}_{M',\eps})}) $ to itself, and from 
\eqref{ncontest}(ii) that 
$\mathcal{T}$ is contractive on $ B(0, 2C\|\Theta\|_{L^\infty(\mathcal{W}_{M',\eps})}) $ with contraction constant $<1/2$. 
It follows that \eqref{inteqn} determines
a unique solution for $M, \eps \ll 1$, which,
moreover, is bounded as claimed.
Regularity with respect to parameters is inherited through the fixed-point construction.
Finally, straightforward estimates using exponential convergence (decay) 
of $\Theta$  in $y$, and exponential decay of $|\mathcal{A}(y)-\mathcal{A}(\infty)|$
and propagators $e^{h^{-1} \cA(\infty) (x-y)}$ in $|\Re(x-y)|$
yield exponential convergence (decay) of $\alpha$.
\end{proof}

\br\label{wedgermk} The introduction of oblique contours $[x, x+\gamma_\pm (+\infty)]$, made possible by the assumption
of analyticity on a wedge,
is what makes possible the subdivision of eigenvalues of $\mathcal{A}(\infty)$ yielding strict exponential decay.
With analyticity only on a strip, we would have a problem with algebraic growth in case that $\mathcal{A}(\infty)$
had a pure imaginary eigenvalue with nontrivial Jordan block.
Less critically, we would have at best neutral decay of propagators, hence would lose a power of $h$ in the exact conjugation,
since $h^{-1}e^{-\eta t/h}$ is integrable for $t\in \R^+$ only for $\Re \eta>0 $ (see \cite{Z1, LWZ1}).
\er

\br\label{nonexprmk}
By $O(e^{-\eta |\Re (x-y)|})$ decay of propagators, the contribution to \eqref{fixedinfty}
from $|x-y|\geq |x|/C$ is negligible for any $C>0$, whence, taking $C\gg 1$, we recover 
from decay (convergence) of $\theta$ decay (convergence) of $\alpha$, whether or not the rate of decay 
(convergence) is exponential.  In particular, for algebraic decay (convergence) of $\theta$, 
we obtain decay (convergence) of $\alpha$ at the same rate.
\er

\subsection{$C^r$ version in the case of a spectral gap}\label{s:gap}
By a modification of the ``variable-coefficient
conjugation lemma,'' Lemma 4.3 of \cite{Z1} (see also Remark 7.2 of the same reference),
we may alternatively obtain existence of a block-diagonalizing 
conjugator 
for $A_{jj}$, $\Theta$
real-valued and merely $C^r$, $r>1$, again on $[M,\infty)$,
$M>>1$,  under the additional assumptions that (i)
$\Re A_{11}\geq \Re A_{22}$ or vice versa, where $\Re N:=(1/2)(N+N^*)$
refers to the symmetric part of a matrix $N$,
and (ii) $\Theta$ is exponentially decaying as $x\to +\infty$, uniformly in $h$.
If $p\geq 2$, we may, further, take $M$ with any value (even 
egative),
for $h>0$ sufficiently small,
where, $A_{jj}$ and $p$ are as in \eqref{std}.
(Recall, Remark \ref{genrmk}, that exponential decay condition
(ii) may be arranged so long as there
is spectral separation (as opposed to gap) between blocks.)
The proof amounts to the observation that we may in this case use the 
fixed-point construction \eqref{inteqn} restricted to $x\in \R$
and $z_*:=-M$, $z^*=M$,
noting that \eqref{expbd} hold with
$\gamma=1$ and $\eta=0$. 
This clarifies perhaps 
what can already be done with $C^r$ coefficients
and what is gained from analyticity.
We show in Section \ref{s:cegs} that these distinctions
are actual and not only apparent; that is, the hypotheses
for analytic vs. $C^r$ versions are sharp.


\br
If an analytic function has an accumulation point of purely 
imaginary values on the real line then it is imaginary-valued on the real 
line (inspection of coefficients at the accumulation point).
Thus,  crossing of real parts are isolated, and one can use the 
real-valued
variable coefficient conjugation lemma to treat intervals between,
so long as the neutral numerical range condition
$\Re A_{11}\geq \Re A_{22}$ is maintained-
roughly, so long as neutral eigenvalues stay semisimple. 
An advantage of this approach is to know the size of intervals on which conjugators
exist.
On the other hand, the $C^r$ conjugators so constructed
are not in general analytic, even if coefficients $A_{jj}$, $\Theta$ 
are analytic, unless
the neutral numerical range condition extends to a complex strip.
\er

\subsection{Diagonalization at a (finite) singular point}\label{s:conjsing}
{ }

\begin{proof}[Proof of Corollary \ref{singprop2}]
	This case may be converted to the problem treated previously in Theorem \ref{singprop}.
In coordinates $x:=-\ln z$, we obtain
\ba\label{singinfeg}
h\frac{dW}{dx}=  \bp A_{11}& 0\\0 & A_{22}\ep(e^{-x},h) W 
+ h^p \Theta(e^{-x},h) W,
\ea
$A_j$, $\Theta$ analytic in $x$ on a half-plane $\Re x\geq M$,
and convergent as $\Re x\to +\infty$ ($ z\to 0$).
Applying Theorem \ref{singprop}, we obtain a conjugator that is
analytic in $x=-\ln z$ on a wedge $x \in \mathcal{W_{M',\eps}}$ as in \eqref{wedge} for $M'>0$
sufficiently large and $\eps>0$ sufficiently small and convergent
as $\Re x\to \infty$,
hence analytic in $z$ on a slit disk about $z=0$ with cut along the negative
real axis and continuous at $z=0$.
\end{proof}



\section{Oscillatory integral estimates}\label{s:someosc}
As explored further in Section \ref{s:tri}, 
existence of conjugators is related to decay rates for certain oscillatory integrals.
For later use, we carry out here some estimates needed for our analysis.

\subsection{Variation on a classical sum}\label{s:classic}
We first generalize the estimate
$ \int_{-\infty}^{+\infty} e^{-\frac{y^2}{h}} e^{\frac{-2iy}{h}}dy \sim h^{1/2}e^{-1/h}, $
obtained by direct Fourier transform to general analytic symbol $a(y)$ and bounded domains.

\begin{proof}[Proof of Lemma \ref{quadstat}]
	Consider $ \int_{-x}^{x} e^{-\frac{y^2}{h}} e^{\frac{-2iy}{h}}a(y)dy $, where $a$ is
	holomorphic on $[-L,L]\times [-i,i]$.
	For $x\geq c_0 > 1$, we obtain the estimate by the Analytic Stationary Phase Lemma \ref{l:stat}
	together with Remark \ref{r:stat}(i),
	defining the contour $\mathcal{C}= [x,-i+\eps]\cup [-i+\eps, -i-\eps]\cup [-i-\eps,-x]$, 
	$0<\eps\ll 1$ real, and noting that $[-x,x]\cup \mathcal{C}$ forms a closed contour in the domain of analyticity 
	$[-L,L]\times [-i,i]$ of $a$, on which the 
	phase $\phi(y):= -y^2-2iy$ has a single quadratically nondegenerate critical
	point at $y=-i$ and, for $\eps$ sufficiently small, satisfies $\Re \phi(y)\leq \Re \phi(-i)= -\frac{1}{h}$,
	with $\Re \phi(a), \Re \phi(b)<\Re \phi(-i)$.

	The estimate for $x\leq L<1$ follows, similarly, 
	defining the contour $\mathcal{C}'= [x,-i|x|+\eps]\cup [-i|x|+\eps, -i|x|-\eps]\cup [-i|x|-\eps,-x]$, 
	and observing that the phase $\phi$ has no critical points on $\mathcal{C'}$
	with $\Re \phi(y)\leq e^{-x^2/h}$ throughout.
	Expressing $\int_{-x}^x=\int_{\mathcal{C}'}$ using Cauchy's Theorem, we obtain by a nonstationary phase
	computation, integrating by parts, finally,
		\ba\label{nonstat}
		\int_{\mathcal{C}'}e^{-\phi(z)/h}a(z)dz &= -h \int_{\mathcal{C}'}(e^{-\phi(z)/h})'a(z)/\phi'(z) dz\\
&= h e^{-\phi(z)/h}a(z)/\phi'(z)|^x_{-x} + h \int_{\mathcal{C}'}e^{-\phi(z)/h}(a(z)/\phi'(z))' dz
		\lesssim he^{-x^2/h}.
		\ea

		For $x\leq 1=L$, using the original contour $\mathcal{C}$, a nonstationary phase estimate like \eqref{nonstat} shows that the contribution from the parts $[x,-i+\eps]$ and $[-i-\eps,-x]$ is $\lesssim he^{-x^2/h}$, while
		the contribution from $[-i+\eps, -i-\eps]$ may be estimated directly from \eqref{e:statest}
		as $\sim (h^{1/2} a(-i) +O(h))e^{-1/h}$, whence, summing, we obtain 
	$\int_{-x}^x e^{-\phi(z)/h}a(z)dz \sim h^{1/2} a(-i)e^{-1/h} +O(he^{-x^2/h})$ as claimed.
\end{proof}

\subsection{A $C^\infty$ oscillatory integral}\label{s:osc}
We next investigate the integral
$$
\int_{-x}^x e^{-\frac{y^2}{h}-2i\frac{y}{h}} a(y)dy
=
\int_{0}^x e^{-\frac{y^2}{h}-2i\frac{y}{h}} a(y)dy
$$
with symbol $a(y)\equiv 0$ for $y\leq 0$, $a(y)=e^{-y^{-\theta}}$ for $y>0$
that is $C^\infty$ but not analytic.

\begin{proof}[Proof of Lemma \ref{halfprop}]
	Take without loss of generality $x=+\infty$, noting that the resulting error is negligible,
	denoting the resulting quantity 
	$I(h):= \int_{0}^{+\infty} e^{-\frac{y^2}{h}-2i\frac{y}{h}} a(y)dy $.
	Define
	\be\label{ab}
	\alpha= 1-1/s \in (0,1),
	\quad
	\beta= e^{-\frac{i\pi (1-1/s)}{2} }.
	\ee

	\medskip

	{\it Case (i)} ($1<s< 2$).
	Deforming the contour $[0,+\infty]$ using Cauchy's Theorem to the 
	contour 
	$$
	z=h^\alpha \beta
	t, \quad t\in (0,+\infty),
	$$
	we obtain
	\be\label{prelim}
	I(h)\sim h^\alpha \int_0^\infty e^{\frac{i\beta( -2t -t^{-\theta} +i\beta 
	h^\alpha  t^2)}{h^{1-\alpha}}} dt , 
	\ee
	where the real part of the phase, 
	$
	\Re i \beta( -2t -t^{-\theta} +i  \beta h^\alpha t^2),
	$
	has a unique, quadratically degenerate maximum for $h=0$ at $t_0=  
2^{-(1-1/s)}$.
	Applying the Analytic Complex Stationary Phase Lemma,
	Lemma \ref{l:stat}- more precisely the generalization of
	Remark \ref{r:stat}(3) to an $h$-dependent phase,\footnote{
	With also $\mathcal{C}=[b,a]$, in the notation of the lemma.} 
	we obtain finally
	$ I(h)\sim h^{\alpha}h^{\frac{1-\alpha}{2}} e^{\Re (i\beta) 
		\big(\frac{-2t_0 - t_0^{-\theta} + i\beta h^\alpha+ O(h^{2\alpha})}{h^{1-\alpha}}\big)}, $
	yielding the result.
\medskip

	{\it Case (ii)} ($s\geq 2$). 
	In this case, $\Re (-\beta^2) = - \cos( \pi (1-1/s)) >0$, 
	and so we cannot move the contour as in the
	previous case to a ray in direction $\beta$; indeed, the integral in \eqref{prelim} is not convergent.
	Instead, we first move to the furthest possible ray
	$ z=h^\alpha e^{-i\pi_4} t$, $t\in (0,+\infty)$
	on which the infinite integral converges (and for which the contribution at infinity vanishes for all rays between, justifying the shift of contour),
	obtaining
	$
	I(h)\sim h^\alpha \int_0^\infty e^{\frac{i\beta_*( -2t -t^{-\theta} +i\beta_* 
	h^\alpha  t^2)}{h^{1-\alpha}}} dt $
	for $\beta_*= e^{-i\pi_4}$, then truncate to
	\be\label{sready}	
	I(h)\sim h^\alpha \int_0^L e^{\frac{i\beta_*( -2t -t^{-\theta} +i\beta_* 
	h^\alpha  t^2)}{h^{1-\alpha}}} dt 
	\ee
	for $L\gg1$ at the cost of a negligible $O(e^{cL/h^{1/s}})$ contribution.
	Finally, we estimate \eqref{sready} using the $h$-dependent stationary phase lemma, with countour 
	$
	\mathcal{C}= [\beta_* L, \beta L] \cup [\beta L, 0],
	$
	$\beta $ as in \eqref{ab}, yielding the same estimate 
	$ I(h)\sim h^{\alpha}h^{\frac{1-\alpha}{2}} e^{\Re (i\beta) 
		\big(\frac{-2t_0 - t_0^{-\theta} + i\beta h^\alpha+ O(h^{2\alpha})}{h^{1-\alpha}}\big)} $
	as in the previous case. 
	\end{proof}

\br
The rate of exponential decay obtained in Lemma \ref{halfprop}
may be recognized as $e^{\Re \Psi(z_*(h),h)}$, where $z_*(h)$
is a critical point of the augmented phase $\Psi(y,h):=(-y^2/h -2iy/h-1/y^\theta)$ obtained by including
the symbol $e^{-1/y^\theta}$ as part of the phase.
\er

\subsection{Gevrey-regularity complex stationary phase}\label{lebeaurmk1}
\begin{proof}[Proof of Proposition \ref{p:gevrey}]
	First observe that, by estimate \eqref{element}, we may construct cutoff functions $\xi$ of Gevrey $(s,T)$ class for arbitrary
	$s$ and (by rescaling) $T$, hence, muliplying $a$ by such a $\xi$ supported near the origin and periodically extending, the observing that the error incurred is negligible, we may restrict to the case of $a$ periodic.
	For periodic functions Foias and Temam have derived an equivalent version
$
\| a\|^*_{s,T}:= \sqrt{
\sum_{j\in \mathcal{Z}} (1+|j|)^2 e^{2T|j|^{1/s}} |\hat a_j|^2}
$
of the Gevrey $(s,T)$-norm \cite{FT}, \cite[(2.4), p. 4]{PV}, where $\hat a_j$ denotes Fourier transform, with
$T$ in the analytic case $s=1$ corresponding to width of the strip of
analyticity about the real axis $\bbR$.
For $T> 1$, and $\|a\|^*_{s,T}$ finite, the complex stationary phase integral
$ L_a(h):=\int_{\bbR} e^{-y^2/h- 2iy/h} a(y) dy$
may be seen to satisfy $ L_a(h)\lesssim C\|a\|^*_{T,s} h^{k_s} e^{-c/h^{1/s}}$, $c,C>0$,
by triangulation with the function $\alpha$, analytic on a
strip of width $T>1$, obtained by Fourier truncation of $a$ at modes
$|\xi|\leq 1/Th$, using the analytic stationary phase estimate 
$$
L_\alpha(h)\lesssim h^{1/2}\|\alpha\|^*_{T,1}e^{-1/h} \lesssim h^{1/2}|a\|^*_{T_0,s}e^{- 1/h^{1/s}}
$$
following from \eqref{e:quadtrans}, 
the fact that $\sup |\alpha|$ on the strip $[-L,L]\times [-i,i]$
of width $1$ is bounded by $\|\alpha\|^*_{T,1}$,
and the straightforward estimate 
$ \|\alpha\|^*_{T,1}\leq \|\alpha\|^*_{T,s}e^{1/h- T_0(T/h)^{1/s}} $,
together with the truncation error bound 
$$
\begin{aligned}
\sup_{x\in [-L,L]\subset \R} |\alpha -a| \lesssim 
\sum_{|j|\ge T/h} |\hat a_j|
&\leq 
 \sqrt{ 
\sum_{|j|\geq T/h} (1+|j|)^2|\hat a_j|^2
 }
 \sqrt{ \sum_{|j|\geq T/h} (1+|j|)^{-2} }\\
 &\lesssim	\| a\|^*_{s,T}
 \sqrt{ (1+|T/h|)^{-1}} e^{-T|T/h|^{1/s}}
 \lesssim h^{1/2} \| a\|^*_{s,T} e^{-c(T_0,T,s) /h^{1/s}}, 
 \end{aligned}
$$
where we have used in the first inequality Hausdorff--Young's inequality,
in the second Cauchy-Schwarz' inequality, 
and in the third the definition of $\|\cdot\|_{s,T}^*$.
The latter bound gives evidently
$$
|L_\alpha(h)-L_a(h)|\lesssim \sup_{x\in \R} |\alpha -a| 
 \lesssim h^{1/2} \| a\|^*_{s,T} e^{-c(T_0,T,s) /h^{1/s}}, 
$$
from which we obtain finally that, for some $c>0$,
$$
|L_a(h)|\leq
|L_\alpha(h)|+ |L_\alpha(h)-L_a(h)|\lesssim h^{1/2} \|a\|_{s,T_0}^* e^{-c /h^{1/s}}.
$$
\end{proof}

\br\label{ftruncrmk}
The Fourier truncation argument above gives a general way of
interpolating between results for analytic and $C^r$ functions.
For example applied to the analytic interpolation bound 
interpolation error $\leq C\|f\|_{1,T} (T+ \sqrt{1+T^2})^{-N}$
of \cite{DY} (adapted from \cite{T}), with cutoff $|\xi|\geq N$,
this yields
the more general bound 
$
\hbox{\rm error }\; \leq C\|f\|_{s,T} (T+ \sqrt{1+T^2})^{-N^{1/s}}
$
for Chebyshev interpolation of periodic functions $f$ on $[-1,1]$
with $N$ mesh points.
\er

\br[$C^r$ stationary phase]\label{crrmk}
Similarly, the $W^{r,\infty}$ symbol $a(y)=0$ for $y\leq 0$, $a(y)=y^r$ for $y>0$ yields
by a standard nonstationary-phase argument involving $r$ integrations by parts
$$
\int_{-x}^x e^{-y^2/h-2iy/h} a(y) dy = \int_{0}^x e^{-y^2/h-2iy/h} a(y) dy \sim h^{r}
\quad \hbox{\rm for $0<c_0\leq x$},
$$
verifying sharpness of the nonstationary van der Korput bound for $W^{r,\infty}$ symbols $a$ \cite{M}.
\er


\section{Counterexamples}\label{s:cegs}
We complete our analysis by carrying out the 
counterexamples described in the introduction.

\subsection{Block-diagonalization and decay of oscillatory integrals}\label{s:tri}
Recall the triangular system 
$$
hW'=AW:=\bp \lambda_1& h^p \theta\\0 & \lambda_2\ep W
$$
introduced in \eqref{ntrisys}
of the introduction,
with $\lambda_1(x)= x+i$, $\lambda_2=-(x+i)$, $W\in \C^2$, $p\geq 1$.

\begin{proof}[Proof of Lemma \ref{l:oscequiv}]
	We seek a coordinate change $W=TZ$ with $T$, $T^{-1}$ uniformly bounded
	in $C^1$, converting \eqref{ntrisys} to an
	exactly diagonal system $hZ'=DZ$.
	From $p\geq 1$, uniform boundedness in $C^1$, and the relation
	$D= T^{-1}AT - hT^{-1}T'$, we find, first, that 
	$
	D=\bp \lambda_1 + O(h)& 0\\0 & \lambda_2+ O(h)\ep.
	$
	Comparing $O(1)$ terms in the off-diagonal entries of
	$T^{-1}AT$, we have also 
	that $T_{11}T_{12}, T_{21}T_{22} \lesssim h$,
	and, since $\det T$ must be bounded below by boundedness
	of $T^{-1}$, $T_{11}T_{22}-T_{12}T_{21}\gtrsim 1$.

	The latter observations imply that either 
	(i) $T_{11}, T_{22}\sim 1$ and $T_{12}, T_{21} \lesssim h$, or
	(ii) $T_{12}, T_{21}\sim 1$ and $T_{11}, T_{22} \lesssim h$.
	Noting that case (ii) may be reduced to case (i) by exchanging $T$ for
	$
	\tilde T=T\bp 0 & 1\\ 1 & 0\ep,
	$
	we may assume without loss of generality (i).
	By a further, diagonal transformation rescaling the diagonal entries of $T$
	to one, we may arrange that $T=\bp 1 & ha\\ h\hat a& 1\ep$,
	with $(a, \hat a)(x,h) \lesssim 1$.
	Comparing off-diagonal coefficients of $T^{-1} AT- hT^{-1}T'$, we find by an
		induction argument, finally, that 
\be\label{Tform}
		T=\bp 1 & h^p\alpha\\ h^p\hat \alpha& 1\ep,
\quad
	D=\bp \lambda_1 + O(h^p)& 0\\0 & \lambda_2+ O(h^p)\ep;
\qquad	
	(\alpha, \hat \alpha)(x,h) \lesssim 1.
\ee
(Alternatively, for purposes of this argument, we could take without loss of
generality $p=1$.)

This reduces us to the case treated in Theorem \ref{wasow1}, with $\Theta_{11}=
\Theta_{22}=\Theta_{21}\equiv 0$, $\Theta_{12}= \theta$, $\alpha_{12}=\alpha$
and $\alpha_{21}=\hat \alpha$, in which \eqref{ricatti} specializes to
		\ba\label{ricatti2}
		h \alpha'&= (\lambda_1-\lambda_2)\alpha +  \theta ,\\
		h \hat \alpha'&= (\lambda_2-\lambda_1)\hat \alpha 
		- h^p \hat \alpha \theta \alpha ,\\
\ea
with $d_1= \lambda_{1} + h^p \theta \hat \alpha $ and $ d_{2} = \lambda_2$.
Noting that \eqref{ricatti2}(i) is independent of \eqref{ricatti2}(ii), and that
\eqref{ricatti2}(ii) is consistent with $\tilde \alpha\equiv 0$, we find that
{\it there exists a diagonalizing transformation if and only if there exists a 
triangular one, $\hat \alpha\equiv 0$, which exists if and only if 
there is a solution $\alpha$ of \eqref{ricatti}(i) that is uniformly bounded as $h\to 0^+$,}
equivalent by Duhamel's principle to uniform boundedness of
\ba\label{t12}
\alpha(x,h) &=
h^{-1}\int_0^x e^{h^{-1}\int_y^x (\lambda_1 -\lambda_2)(z)dz} \theta(y) dy
- e^{h^{-1}\int_0^x (\lambda_1 -\lambda_2)(z)dz} \alpha(0,h) \\
&=
e^{(x^2+2ix)/h}\Big(h^{-1}\int_0^x e^{ -(y^2+2iy)/h} \theta(y) dy - \alpha(0,h)\Big).
\ea

\medskip

{\it ($\Leftarrow$):}
Noting that 
$ e^{-2ix}\alpha(x) - e^{2ix}\alpha(-x) =
e^{x^2/h}\int_{-x}^x e^{ -(y^2+2iy)/h} \theta(y) dy, $
we see that \eqref{t12} is uniformly bounded as $h\to 0^+$
only if $ \int_{-x}^x e^{ -(y^2+2iy)/h} \theta(y) dy \lesssim h e^{x^2/h}$.

\medskip

{\it ($\Rightarrow$):} 
Choosing $\alpha(0,h)= h^{-1}\int_0^L e^{ -(y^2+2iy)/h} \theta(y) dy $,
we obtain from \eqref{t12}
\be\label{1}
\alpha(x,h)=
e^{(x^2+2ix)/h}h^{-1}\int_x^L e^{ -(y^2+2iy)/h} \theta(y) dy,
\ee
or, alternatively,
\ba\label{2}
\alpha(x,h)&=
e^{(x^2+2ix)/h}h^{-1}\int_x^{-L} e^{ -(y^2+2iy)/h} \theta(y) dy\\
&\quad +
e^{(x^2+2ix)/h}h^{-1}\int_{-L}^{L} e^{ -(y^2+2iy)/h} \theta(y) dy.
\ea
For $0\leq x\leq L$, a nonstationary phase estimate as in the proof of Lemma \ref{quadstat} (case $|x|\leq 1$)
yields uniform boundedness of the righthand side of \eqref{1} as $h\to 0^+$; 
for $-L\leq x\leq 0$, the same estimate yields uniform boundedness of 
the $\int_x^{-L}$ term on the righthand side of \eqref{2}, while assumption \eqref{e:onlyif} yields uniform boundedness
of the $\int_{-L}^L$ term.  Thus, $\alpha$ is uniformly bounded for all $|x|\leq L$.

\end{proof}

\br\label{nonzerormk}
From the case $\lambda_j$ constant, we see that
in general $\alpha(0,h)\ne 0$, or $T(0,h)\neq \Id$.
\er

\subsection{Global counterexample}\label{s:global}

\begin{proof}[Proof of Corollary \ref{c:global}]
	For any nonvanishing $\theta$ analytic on $[-1,1]\times [-i,i]$ and continuous on
	$[-L,L]$, we find by Lemma \ref{quadstat} that 
	condition \ref{e:onlyif} is violated for $L>1$.	
\end{proof}

\subsection{$C^\infty$ counterexample}\label{s:cinfty}
\begin{proof}[Proof of Corollary \ref{c:global}]
	For the Gevrey class $s$ function defined by $\theta(y)\equiv 0$ for $y\leq 0$ and
	$\theta(y)= e^{-y^{-\theta}}$ for $y>0$, $\theta=\frac{1}{s-1}$, 
		we find by Lemma \ref{halfprop} that 
	condition \ref{e:onlyif} is violated for any $L>0$.	
\end{proof}

\br[$C^r$ counterexample]\label{crceg}
For the $W^{r,\infty}$ function defined by $\theta(y)\equiv 0$ for $y\leq 0$ and $\theta(y)=y^r$ for $y\geq 0$, we find by
Remark \ref{crrmk} that condition \ref{e:onlyif} is violated for any $L>0$, giving a particularly elementary counterexample
to existence of diagonalizers in the $C^{r-1}$ case, $r$ arbitrary.
\er

\subsection{Regular-singular point counterexample}\label{s:rs}
Finally, for singular ODE
$
w'= (1/z)A(z,h)w,
$
with $A$ analytic on the disk $B(0,r)$, we show that there {\it may not}
always exist an analytic block-diagonalizing conjugator on the whole disk, but only on a slit disk as described in Corollary \ref{singprop2}.
%

\begin{example}\label{sliteg}
Consider 
$
hzW'= \bp 1 & 0 \\ 0 & 0\ep W + h \bp 0 & z \phi\\0 & 0\ep W,
$
$\phi$ analytic.
Looking without loss of generality 
(following Section \ref{s:tri})
for a triangular conjugator  
$T=\bp 1 & h\alpha  \\ 0 & 1\ep$,
we obtain the ODE
$
h\alpha'= \alpha/z + \phi.
$
Assuming Taylor expansions $\alpha(z;h)=\sum_j \alpha_j(h) z^j$, $\phi(z)=\sum_j \phi_j z^j$, we find, comparing
coefficients of like order, that $\alpha_0=0$ and $(jh-1)\alpha_j= \phi_{j-1}$ for $j\geq 1$.
Thus, there is a formal solution for all $0<h\leq h_0$ if and only if $\phi$ is polynomial, in which case there is a polynomial solution $\alpha$.  For, otherwise, taking $h_j=1/j$, we have a sequence $h_j\to 0^+$ for which there is no solution whenever
$\phi_{j-1}\neq 0$.
Rewriting the ODE as $(z^{-\frac{1}{h}}\alpha)'=hz^{-\frac{1}{h}}\sum_j\phi_j z^j$ and integrating term by term, we see explicitly the appearance of a $\phi_{j-1}z^j \log z$ term when $h=1/j$ and $\phi_{j-1}\neq 0$.
Thus, in general there may not exist an analytic diagonalizer for the regular singular point problem \eqref{sing}.  
On the other hand, if blocks $A_{11}(0,0)$ and $A_{22}(0,0)$ have no eigenvalues differing by a real value.
one can show that an analytic solution $(\alpha_1,\alpha_2)$ exists for $|z|\leq r$, $0<h\leq h_0$, by
solving for the formal power series as above, and showing convergence by direct estimates of coefficients.
\end{example}



\appendix



\section{Analyticity of traveling-wave profiles}\label{s:profiles}


\subsection{An Analytic Stable Manifold Theorem}

Consider a complex analytic ODE 
\be\label{nonline}
 (d/dt)u=f(u),\qquad t\in \CC^1,\, u\in \CC^n, \,,  f\in C^\omega:\CC^n\to \CC^n,
\ee
defined in a neighborhood of an equilibrium $u_*$,
$f(u_*)=0$, with associated linearized equation 
\be\label{line}
v'=Av,
\qquad A:=df(u_*).
\ee
Associated with $A$, define the stable subspace $\Sigma_s$
as the direct sum of all eigenspaces of $A$ associated to
stable eigenvalues, or eigenvalues with strictly negative real
part.  Likewise, define the center, and unstable
subspaces $\Sigma_c$ and $\Sigma_u$ as the direct
sum of eigenspaces associated to neutral and unstable eigenvalues,
respectively, i.e., eigenvalues with zero and strictly positive
real part.  This gives a decomposition 
$\CC^n= \Sigma_s \oplus
\Sigma_c \oplus \Sigma_u$
of $\CC^n$ into subspaces that are invariant under the flow of \eqref{line}.

Defining associated (total) eigenprojections  $\Pi_s$,
$\Pi_c$, and $\Pi_u$ as the sum of all eigenprojections 
associated with stable, neutral, and unstable eigenvalues, respectively,
we have the standard bounds
\ba\label{expbds}
|e^{At}\Pi_s|&\le C(\eta)e^{-\eta |\Re t|}, \quad \Re t\ge 0,
\; |\Im t|\leq \nu |\Re t|,\\
|e^{At}\Pi_c|&\le C(\eta)e^{\theta |t|} \qquad  |\Im t|\leq \nu |\Re t|,\\
|e^{At}\Pi_u|&\le C(\theta) e^{-\eta |\Re t|}, \quad \Re t\le 0,
\; |\Im t|\leq \nu |\Re t|,\\
\ea
for any $\eta>0$ strictly smaller than the minimum absolute value
of the real parts of stable and unstable eigenvalues,
$\theta>0$ arbitrarily small, and some $\nu>0$.

\bt [Analytic Stable Manifold Theorem] \label{t:stableman}
For $f\in C^\omega$, there exists local to $u_*$
a $C^\omega$ stable manifold $\cM_s$
tangent at $u_*$ to $\Sigma_s$, expressible
in $w:=u-u_*$ as a $C^\omega$ graph 
$\Phi_s:\Sigma_s\to \Sigma_c\oplus \Sigma_u$,
that is (locally) invariant under the flow of \eqref{nonline}
and uniquely determined by the property that solutions in
$\cM_s$ decay exponentially to $u_*$ in ``approximately forward'' time, 
in the sense that
$|w(t)|\le C(\tilde \eta)e^{-\tilde \eta|\Re t|}|w(0)|$
for $\Re t\geq 0$, $|\Im t|\leq \nu |\Re t|$,
and any $0<\tilde \eta< \eta$, $\eta, \nu$ as in \eqref{expbds}.
\et

\begin{proof}
Defining $w:=u-u_*$, we obtain the nonlinear perturbation equation
$
w'=Aw + N(w),
$
where $A:=(df/du)(u_*)$ is constant and 
$
N( w):= f(u_* + w)- f(u_*)- (df/du)(u_*))w
$
is the first-order Taylor remainder for $f(u)-f(u_*)$, 
satisfying $N(0)=0$, $(dN/d w)(0) =0$, hence
\be\label{e:Nbds}
\lim_{|w|\to 0^+}|N(w)|/|w|=0,
\qquad
\lim_{|w|\to 0^+}|(dN/dw)(w)|=0.
\ee

Applying projections $\Pi_j$, $j=s,u,c$, and using the fact that
these commute with $A$, we may coordinatize as
three coupled equations 
$
(\Pi_j w)'=A(\Pi_j w) + \Pi_j N(w)
$
in variables $w_s:=\Pi_s w$,
$w_c:=\Pi_c w$, and $w_u:=\Pi_u w$.
Considering $\Pi_j N$ as inhomogeneous source terms and applying
the variation of constants formula, 
we obtain
\be\label{vareq}
\Pi_j w(t)=
e^{A(t-t_{0,j})}\Pi_j w(t_{0,j}) + \oint_{t_{0,j}}^t e^{A(t-s)}\Pi_j  N(s, w(s))\,ds,
\ee
$j=s,c,u$, so long as the solution $w$ exists.

Under assumption $|w(t)|\le Ce^{-\tilde \eta \Re t}$
for $\Re t\geq 0$, $|\Im t|\leq \nu |\Re t|$,
we find by \eqref{expbds} that $|e^{A(t-\tau)}\Pi_j w(\tau)|$
decays exponentially as $\Re \tau\to +\infty$ for $j=c,u$
with $|\Im (\tau-t)|\leq \nu |\Re (\tau-t)|$
and  $\Re t\geq 0$, $|\Im t|\leq \nu |\Re t|$,
while $ \oint_{t}^{\tau}e^{A(t-s)}\Pi_j  N(s, w(s))\,ds$ 
(since $|N(w,s)|\le C|w(s)|$ for $|w|$ uniformly bounded) converges
to a limit.
Thus, taking $t_{0,j}\to +\infty$ for $j=c,u$, in \eqref{vareq},
we find that the first (linear) term on the righthand side disappears,
leaving
$\Pi_j w(t)= -\oint_{t}^{+\infty}e^{A(t-s)}\Pi_j  N(s, w(s))\,ds$.
Choosing $t_{0,s}=0$ and summing the three equations \eqref{vareq},
we obtain for $w_s:=\Pi_s w(0)$ the integral fixed-point representation
\ba\label{Trep}
w(t)=T(w_s, w)&:=
e^{At}w_s + \oint_{0}^t e^{A(t-s)}\Pi_s  N( w(s))\,ds
-\oint_{t}^{+\infty}e^{A(t-s)}\Pi_{cu}  N( w(s))\,ds,
\ea
valid for all solutions decaying exponentially in approximately forward time
at rate $Ce^{-\tilde \eta \Re t}$, where $\Pi_{cu}:=\Pi_c + \Pi_u$
denotes the total eigenprojection of $A$ onto the center--unstable
subspace $\Sigma_{cu}:= \Sigma_c\oplus \Sigma_u$.
Here, $\oint_t^{+\infty}dt$ denotes {\it any} contour integral lying
within the cone $\Re (s-t)\geq 0$, $|\Im (s-t)|\leq \nu |\Im (s-t)|$
with $\Re s\to +\infty$.

Define the time-weighted norm 
$
\|f\|_{\tilde \eta}:=\sup_{\Re t\ge 0, \,
|\Im t|\leq \nu|\Re t|}e^{\tilde \eta \Re t}|f(t)|,
$
noting $ |f(t)|\le e^{-\tilde \eta \Re t} \|f\|_{\tilde \eta}$ for 
$\Re t\ge 0$. $|\Im t|\leq \nu|\Re t|$.
With respect to this norm, we find by a straightforward (and standard, in
the real setting)
computation using
\eqref{expbds}
that, for $|w_s|$ and $\delta>0$ sufficiently small, 
the integral operator $T$ is Lipschitz in $w_s$ and
contractive in $w$ from the $\|\cdot\|_{\tilde \eta}$-ball 
$B(0,\delta)$ to itself:
\be\label{e:contTs}
\|T(w_s,w_1)-T(w_s,w_2)\|_{\tilde \eta}\le (1/2)|w_1-w_2\|_{\tilde \eta}.
\ee
Thus, by the Contraction Mapping Principle
there exists a unique solution $w=w(w_s)\in B(0,\delta)$ in 
$\cB_{\tilde \eta}:=\{f:\, \|f\|_{\tilde \eta}<\infty \}$. 
Appealing, finally, to the analytic dependence
of analytic fixed point mappings on parameters, we obtain that 
$w(w_s)$ is a $C^\omega$ function from 
$\Sigma_s\subset \RR^n\to \cB_{\tilde \eta}$.

Evidently (computing $\Pi_{s}w(0)=w_s$ by applying $\Pi_{s}$ to
the right-hand side of \eqref{e:contTs} and using $\Pi_s\Pi_{cu}=0$), 
the function $w(w_s)$ takes each point in $\Sigma_s$ to 
a unique exponentially decaying solution on $\Re t\ge 0$,
$|\Im t|\leq \nu |\Re t|$
with initial data satisfying $\Pi_s w(0)=w_s$.
Thus, defining 
$
\Phi(w_s):=\Pi_{cu}w(w_s)|_{t=0},
$
we obtain the desired $C^\omega$ map, whose graph, a $C^\omega$ manifold, consists, 
locally, of the set of
data with exponentially decaying solutions, or, equivalently,
the invariant set of orbits decaying exponentially in approximately
forward time.
Tangency, $\Phi(0)=0$ and $d\Phi(0)=0$, then follow
by uniqueness together with the fact that
$N(0)=0$, $(dN/d w)(0) =0$.
\end{proof}

\br\label{profrmk}
The above argument can be recognized as the standard fixed-point construction for ODE on the real line, together with Cauchy's Theorem plus the observation that the standard linearized bounds \eqref{expbds} hold on a wedge.
In the absence of a center subspace, essentially the same argument
yields that the stable manifold is uniquely decribed as the set of
solutions merely bounded in approximately forward time.
Indeed, setting $\nu=0$, we find that all solutions merely 
bounded and close to $u_*$
in forward time for $t$ real must lie in the stable manifold as well.
\er

\subsection{Application: analyticity of profiles}\label{s:appman}
Applying the above stable (unstable) manifold theorem to a profile ODE,
we obtain immediately a result of {\it analyticity on an approximately
forward (approximately backward) sector}.
Consider a general profile ODE
\be\label{prof}
u'=f(u),
\ee
written as a first-order system, with $f$ analytic on the
set $u\in \mathcal{U}\subset \CC^n$ in question and $'$ denoting $(d/dx)$.
Here, the connection problem can be either a heteroclinic/homoclinic
one on the whole real line, or a boundary-value problem on a forward
or backward real half-line.
In any case, provided the desired endstate in forward real time is
a hyperbolic rest point for the ODE restricted to the real axis,
we find from Remark \ref{profrmk} that any connecting profile 
(since bounded and close to $u_*$, at the least) must belong to the
complex analytic stable manifold described above, consisting entirely
of orbits analytic and exponentially decaying
in an approximately forward sector in $x$.
More generally, this holds for any exponentially decaying profile,
even in the presence of a center subspace at $u_*$.

\section{Complex saddlepoint estimate/method of stationary phase}\label{s:stat}

\bl[Analytic Stationary Phase Lemma: adapted from \cite{S,PW}]\label{l:stat}
Let $\mathcal{C}\cup [a,b]\subset \C$ be an oriented continuous closed curve in the complex plane, $a$, $b$ real, with standard orientation on $[a,b]$, enclosing domain $\Omega$,
with $\phi$, $A$ analytic in $x$ on $\bar \Omega$ and $A$ Lipschitz continuous in $h$ for $0\leq h\leq h_0$, $h_0>0$. Suppose further that there is a quadratically nondegenerate critical point $z_0\in \mathcal{C}^{int}$ 
of $\phi$ such that $\Re \phi$ has a nonstrict maximum on $\mathcal{C}$ at $z_0$, 
with $\mathcal{C}$ described by
a $C^2$ curve $\gamma(t)$, $t\in [-\eps,\eps]$ in the vicinity of $z_0$, $\gamma(0)=z_0$, and this is the only critical
point on $\bar {\mathcal{C}}$.  Then, 
\be\label{e:statest}
I(h):= \int_a^b e^{\frac{\phi(x)}{h}} A(x,h)dx= 
h^{1/2} e^{\frac{\phi(z_0)}{h}} \big(  A(z_0,0) \sqrt{ \frac{2\pi}{-\phi''(z_0)}} + O(h^{-1/2}) \big) 
\ee
as $h\to 0^+$, where the square root is chosen so that $-\gamma'(0)$ and $\frac{1}{ \sqrt{-\phi''(z_0)}}$
lie in the same half-plane.
\el

\begin{proof}
	By the Cauchy integral formula,
	$  \oint_{ \mathcal{C}\cup [a,b] } e^{\frac{\phi(z)}{h}} A(z,h)dz=0$,
	whence, rearranging, 
	$$
	I(h) e^{-\frac{\phi(z_0)}{h}} 
	=
	-\int_{ \mathcal{C} } e^{\frac{\phi(z)-\phi(z_0)}{h}} A(z,h)dz,
	$$
	where, by assumption, $\Big| e^{\frac{\phi(z)-\phi(z_0)}{h}} \Big|$ is uniformly bounded.
	Substituting $A(z,0)$ for $A(z,h)$ gives a negligible $O(h)$ error, reducing to the classical case treated
	in \cite{S,PW}.
	Next, loosely following \cite{PW}, 
	deform $\gamma(t)$ to $\tilde \gamma(t):= \gamma(t) - \eta \chi(t)\nabla (\Re \phi)$, where $\chi(t)$
	is a $C^\infty$ cutoff function vanishing at $t=\pm \eps$ and identically $1$ in a neighborhood of $z_0$,
	and $\eta>0$ is taken sufficiently small that the image of $\tilde \gamma$, and the domain between the images
	of $\gamma$ and $\tilde \gamma$, lies in the region of analyticity of $A(\cdot, 0)$, $\phi$: that is, an approximate
	gradient descent.
	It is readily seen that, for $\eta$ sufficiently small, 
	$\tilde \gamma(0)=z_0$ and $\Re \phi(\tilde \gamma)$ has a quadratically nondegenerate maximum at $t=0$,
	while $\Re (\phi(\tilde \gamma(t))-\phi(z_0)\leq 0$ for $t\in [-\eps,\eps]$.
	Now, for $|t|\geq h^{5/12}$, we may write the integral over $\tilde \gamma(t)$ as
	$$
	\int e^{\frac{\phi(z)-\phi(z_0)}{h}} A(z,0)=\int  h \partial_z( e^{\frac{\phi(z)-\phi(z_0)}{h}}) A(z,0)/\phi'(z)
	$$
	and obtain a uniform $O(h)$ bound by integrating by parts.
%
For $|t|\leq h^{5/12}$, meanwhile, we have $t^3/h<<1$, so that $e^{O(t^3/h)}= 1+ O(t^3/h)$.
Taylor expanding, therefore, and keeping only lower order terms,
	we find that on this region
	$-\int  e^{\frac{\phi(z)-\phi(z_0)}{h}}A(z,0)dz$ reduces modulo $O(h)$ errors to the exact integral
	$
	\int_\R e^{\frac{\phi''(0)(\tilde \gamma'(0)t)^2}{2h}}A(z_0,0) (-\tilde \gamma'(0)) dt
	=
	h^{1/2} A(z_0,0) \sqrt{ \frac{2\pi}{-\phi''(z_0)}},
	$
	giving the result.  Here, we are bounding remainder terms using 
	$
	\int_\R e^{-\frac{\eta t^2}{h}}(|t|+ |t^3|/h)dt\leq h^{-1/2}\int e^{-\frac{\eta t^2}{2h}}dt
	=O(h^{-1})
	$
	and $ \int_{|t|\geq h^{5/12}} e^{-\frac{\eta t^2}{2h}}dt = O(h^{1/2}e^{-\frac{\eta h^{10/12}}{h}})\ll h^{-1},  $
	and using the change of coordinates $z-z_0= 
	w \sqrt{ \frac {|\phi''(z_0)|} {-\phi''(z_0)}}$ 
	to reduce the exact integral to a standard Gaussian integral 
	$$
	A(z_0,0) \sqrt{ \frac {|\phi''(z_0)|} {-\phi''(z_0)} }
		\int_\R e^{-\frac{|\phi''(0)|w^2}{2h} }dw
	=  A(z_0,0) \sqrt{ \frac{2\pi h}{-\phi''(z_0)}} \;.
	$$
\end{proof}

\brs\label{r:stat}
1. Lemma \ref{l:stat} may be generalized in straightforward manner 
	to the case that $\mathcal{C}$ contains multiple critical points, or critical points at its endpoints \cite{PW},
	yielding an asymptotic expansion as the sum of contributions of each critical point (with endpoints counting half).
	Likewise, for $A(z_0)=0$, and making the additional assumption of strict inequality
	$\Re \phi(a),\Re \phi(b)<\Re \phi(z_0)$, so that boundary terms at $a,b$ are neglible, 
	a standard Taylor expansion/inverse Fourier transform computation argument near 
	$z_0$ \cite{M}[p. 60, exercise (ii]) 
	combined with repeated integration by parts away from $z_0$ yields 
	$$
	I(h) \sim h^{(j+1)/2} e^{\frac{\phi(z_0)}{h}}
	$$
	 in place of \eqref{e:statest}, where $j$ is the order of the first nonvanishing derivative of $\phi$ at $z_0$.

	 2. Lemma \ref{l:stat} might more properly
	 be titled as the {\it Saddlepoint Lemma,} as it implicitly involves a preliminary deformation
	of $[a,b]$ to a curve $\mathcal{C}$ passing through a saddlepoint $z_0$, 
	whose contribution is then estimated by the method of stationary phase.
	Note the difference in emphasis between estimate \eqref{e:statest} and those of classical 
	stationary phase estimates focused on critical points on $[a,b]$, for example, real stationary phase or ``Fourier transform'' type estimates, or the complex-valued version of \cite{PW}.  These estimates would typically give a bound of $O(h^{-N})
e^{\min_{[a,b]}\Re \phi/h}$, $N$ arbitrary, for $I(h)$, reflecting absence of critical points in $[a,b]$, but
not capturing the full exponential rate of decay of \eqref{e:statest}.
Here, we are focused on the sharp rate of exponential decay, rather than details of the asymptotic expansion of the 
$O(h^{1/2})$ algebraic factor as in the classical case.
Put another way,
the classical phrasing of the stationary phase lemma as in \cite{PW} corresponds to the case $\mathcal{C}=[b,a]$.

3. A useful extension, allowing phases $\phi=\phi(x,\theta(h))$ depending analytically on $x$, $\theta$, 
$\theta $ monotone decreasing with $\lim_{h\to 0^+}\theta(h)=0$, is the estimate
\be\label{e:statext}
\int_a^b e^{\frac{\phi(x,\psi(h))}{h}} A(x,h)dx
=h^{1/2} e^{\frac{(\phi(z_0,\theta(h))+ O(\theta(h)^2)}{h}} \big(  A(z_0,0) \sqrt{ \frac{2\pi}{-\phi''(z_0)}} + O(h^{-1/2}+\theta(h)) \big), 
\ee
which follows by first performing the above-described approximate gradient flow for the limiting flux $\phi(\cdot, 0)$ to
reduce to the case that $z_0$ is a strict minimizer of $\Re \phi(\cdot, 0)$ on the image of $\tilde \gamma$, quadratically nondegenerate at $t=0$, then noting that this situation persists under small analytic perturbations, by the Implicit Function Theorem and continuity, with the minimizer $z_0^h $ of $\phi(\cdot, h)$ distance $O(\theta(h))$ from the minimizer for $\phi(\cdot, 0)$.
We make use of this extension in our computation of \eqref{e:quadtrans} above.
\ers


\end{document}